\newtheorem{theorem}{Theorem}%[section]
\newtheorem{corollary}[theorem]{Corollary}
\newtheorem{definition}{Definition}
\newtheorem{observation}{Observation}
\newtheorem{lemma}[theorem]{Lemma}
\newtheorem{claim}{Claim}
\newenvironment{proof}{\noindent {\bf Proof.}}{\hfill\rule{3mm}{3mm}\par\medskip}
\newtheorem{prelem}{{\bf Theorem}}
\newenvironment{oldtheorem}{\begin{prelem}{\hspace{-0.5
em}{\bf}}}{\end{prelem}}
\newtheorem{prelemc}{{\bf Conjecture}}
\title{On the algorithmic complexity of finding hamiltonian cycles in special classes  of planar  cubic graphs}
\author{\sc Behrooz Bagheri Gh.${}^{a}$\footnote{E-mail: behrooz@ac.tuwien.ac.at.}, Tomas Feder\footnote{268 Waverley St., Palo Alto, CA 94301, E-mail: tomas@theory.stanford.edu.},  Herbert Fleischner${}^{a}$\footnote{E-mail: fleisch@dbai.tuwien.ac.at.}, \\ Carlos Subi\footnote{E-mail: carlos.subi@hotmail.com.}}
\date{}
\begin{document}
\maketitle
  \vspace{-1cm}
  \begin{center}
  	$a$
  	{\small \it Algorithms and Complexity Group}\\
  	{\small \it Vienna University of Technology}\\
  	{\small  \it Favoritenstrasse 9-11,}
  	\vspace*{5mm}
  	{\small \it 1040 Vienna, Austria }\footnote{Research supported in part by FWF Project P27615-N25.} \\
 
  \end{center}

\begin{abstract}
It is a well-known fact that hamiltonicity in planar cubic graphs is an 
NP-complete problem. This implies that the existence of an $A-$trail in 
plane eulerian graphs is also an NP-complete problem even if restricted 
to planar $3-$connected eulerian graphs.
In this paper we deal with hamiltonicity in planar cubic graphs $G$ 
having a facial $2-$factor $\mathcal{Q}$ via (quasi) spanning trees of 
faces in $G/\mathcal{Q}$ and study the algorithmic complexity of finding 
such (quasi) spanning trees of faces. We show, in particular, that 
if Barnette's Conjecture is false, then hamiltonicity in $3-$connected 
planar cubic bipartite graphs is an NP-complete problem.\\
\\
%
%\vspace{1cm}
{\bf Keywords:}
Barnette's Conjecture; Eulerian plane graph; Hamiltonian cycle; Spanning tree of faces;  $A-$trail.
\end{abstract}

\section{Introduction and Preliminary Discussion}

Our joint paper~\cite{Bagheri} can be considered as the 
point of departure for the subsequent discussion and results of this 
paper. Next, we make a few historical remarks.
In $1884$, Tait  conjectured that every cubic 
$3-$connected planar graph is 
hamiltonian~\cite{Tait}. %; this came to be known as Tait's conjecture. 
And Tait knew that the validity of his conjecture would yield a simple 
proof of the Four Color Conjecture. On the other hand, the Petersen graph 
is the smallest non-planar $3-$connected cubic graph which is not hamiltonian,~\cite{Petersen}.
Tait's Conjecture
 was disproved by Tutte in $1946$. However, none of the known counterexamples of Tait's Conjecture is 
bipartite. Tutte himself conjectured that every cubic $3-$connected 
bipartite graph is hamiltonian~\cite{Tutte1971}, but this was shown to be 
false by the construction of a counterexample, 
the Horton graph~\cite{Horton}.  Barnette  proposed a 
combination of Tait's and Tutte's Conjectures implying that every counterexample 
to Tait's conjecture is non-bipartite.\\

\medskip \noindent
{\bf Barnette's Conjecture}~\cite{Barenette}
{\it Every $3-$connected cubic planar bipartite graph is hamiltonian.}\\

Holton, Manvel and McKay showed in~\cite{Holton}  that Barnette's Conjecture holds true for graphs with up to
$64$ vertices. The conjecture 
also holds for the infinite family of graphs where all faces are either quadrilaterals or  hexagons, as shown by
Goodey~\cite{Goodey}. However, it is NP-complete to decide whether
a $2-$connected cubic planar bipartite graph  is hamiltonian~\cite{Takanori}. 

For a more detailed account of the early development of hamiltonian graph theory we refer the interested reader 
to~\cite{Biggs}.

As for the terminology used in this paper we follow~\cite{Bondy} unless stated explicitly otherwise.
In particular, the subset $E(v)\subseteq E(G)$ denotes the set of edges incident to $v\in V(G)$.

\medskip \noindent
We repeat some definitions stated already in~\cite{Bagheri} in order to make life easier for the reader.

\begin{definition}
A cubic graph $G$ is {\sf cyclically $k-$edge-connected} if at least $k$ edges must be removed to disconnect $G$ either into two components 
 each of which contains a cycle provided $G$ contains two disjoint cycles, or else into two non-trivial components. The {\sf cyclic edge-connectivity} of $G$ is the maximum $k$ such that $G$ is cyclically $k-$edge-connected, denoted $\kappa{'}_c(G)$.
\end{definition}
\begin{definition}
Let $C$ be a cycle in a plane graph $H$. The cycle $C$ divides 
the plane into two disjoint open domains. The  {\sf interior} $(${\sf exterior}$)$ of $C$ 
is the bounded $($unbounded$)$ domain and is denoted by 
$int(C)\ (ext(C))$. 
By treating parallel edges as a single edge,
we say a cycle $C^{'}$ is {\sf inside} of 
$C$ if $int(C^{'})\subseteq int(C)$.
Moreover,  
a cycle $C$ is said 
{\sf to contain a vertex $v$ inside (outside)}   if $v\in int(C)\ (v \in ext(C))$. Finally,  $C$ is said to be a  {\sf separating} cycle in $H$ if 
$int(C)\cap V(H)\neq \emptyset \neq ext(C)\cap V(H)$.
\end{definition}
%\newpage
\begin{remark}
\begin{description}
\item[1.]
Two edges $e=xy$ and $e^{'}=xy$ are called {\sf parallel edge}s
if the digon $D$ defined by $e$ and $e^{'}$ has no vertices inside.
If two different triangles $T_1$ and $T_2$ have an edge  in common, then 
they have no other edge in common (because of our understanding that 
parallel edges are treated as a single edge), unless there is 
$e_i=xy\in E(T_i),\ i=1,2$, such that $\langle e_1,e_2\rangle$ defines a digon with 
some vertex  inside.
\item[2.] 
Given a $2-$connected plane  graph, we do not distinguish between faces and their face boundaries.
Observe that in planar $3-$connected graphs $H$,
the face boundaries are independent from any actual 
embedding of $H$ in the plane or sphere.
\end{description}
\end{remark}
\begin{definition}
Given a graph $H$ and a vertex $v$, a fixed sequence 
$\langle e_1,\ldots,e_{\deg(v)}\rangle$ of the edges in $E(v)$ 
is called a {\sf positive ordering} of $E(v)$ and is denoted by 
$O^{+}(v)$. If  $H$ is imbedded in some surface, one such $O^{+}(v)$
is given by the counterclockwise cyclic ordering of the edges incident 
to $v$.
\end{definition}

\begin{definition}\label{DEF:A-trail}
Let $H$ be an eulerian graph with a given positive ordering $O^{+}(v)$ 
for each vertex $v\in V(G)$. An eulerian trail $L$ is an {\sf $A-$trail} 
if  $\{e_i,e_j\}$ being a pair of consecutive edges  in $L$ 
incident to $v$ implies 
$j=i\pm1$ $({\rm mod}\ \deg(v))$. 
-- As a consequence, in an $A-$trail in a $2-$connected plane graph any two consecutive edges belong to a face boundary.

An $A-$trail $L$ in an eulerian triangulation of the plane is called {\sf non-separating} if for every face boundary $T$  at least two edges of $E(T)$ are consecutive in 
$L$.  
\end{definition} 

An $A-$trail $L=e_1e_2\ldots e_m$ induces a vertex partition $V_L(H)=\{V_1,V_2\}$ on $V(H)$ as follows.  Consider  a 
$2-$face-coloring of $H$ with colors $1$ and $2$. For every vertex $v$ of $H$, $v\in V_i$ if and only if  there is $j\in \{1,\ldots,m-1\}$ such that $v\in V(e_j)\cap V(e_{j+1})$ and the face containing $e_j$ and $e_{j+1}$ in its boundary is colored $3-i$,  $i=1,2$.

\begin{oldtheorem}~$(${\rm\cite[Theorem~2]{Andersen95}}$)$\label{TH:A-Tr.NP-C} 
The problem of deciding whether a planar eulerian graph admits an 
$A-$trail is NP-complete, even for $3-$connected graphs having only $3-$cycles
and $4-$cycles as face boundaries.
\end{oldtheorem}
In contrast,  Andersen et al. in~\cite{Andersen98} gave a polynomial algorithm for 
finding $A-$trails in simple $2-$connected outerplane eulerian graphs.

\begin{definition}\label{DEF:Radial}
Suppose $H$ is a $2-$connected plane  graph. Let  $\mathcal{F}(H)$ be 
the set of faces of $H$. The {\sf radial graph} of $H$ denoted by 
$\mathcal{R}(H)$ is a bipartite graph
with the vertex bipartition $\{V(H),\mathcal{F}(H)\}$ 
such that $xf\in E(\mathcal{R}(H))$ if and only if $x$ is a vertex in the boundary of 
$F\in \mathcal{F}(H)$ corresponding to $f\in V(\mathcal{R}(H))$.

Let $U\subseteq V(H)$ and let $\mathcal{T}\subset \mathcal{F}(H)$ be a 
set of bounded faces of $H$. 
The {\sf restricted radial graph} 
$\mathcal{R}(U,\mathcal{T})\subset \mathcal{R}(H)$ is defined  by $\mathcal{R}(U,\mathcal{T})=\langle U\cup \mathcal{T}\rangle_{\mathcal{R}(H)}$.
\end{definition}

\begin{definition}\label{DEF:Leapfrog}
Let $G$ be a $2-$connected plane graph and  $v$ be a vertex of $G$ with $\deg(v)\ge 3$. Also assume that a sequence 
$\langle e_1,\ldots,e_{\deg(v)}\rangle$, $e_i=u_iv,\ i=1,\ldots,\deg(v)$ 
is given by the counterclockwise cyclic ordering of the edges incident to $v$. 
\begin{description}

\item[$(i)$]
  A {\sf truncation} of $v$ is the process of replacing $v$ with a 
cycle $C_v=v_1\ldots v_{\deg(v)}v_1$ and replacing $e_i=u_iv$ with 
$e_i^{'}=u_iv_i$, for $i=1,\ldots,\deg(v)$ in such a way
that the result is a plane graph again. A plane graph obtained from 
$G$ by truncating  all vertices of $G$ is called {\sf  the truncation of $G$} 
and denoted by $Tr(G)$.
\item[$(ii)$]
For a  plane graph $G$ let 
$G^{*}$ denote   the dual of $G$. 
The {\sf leapfrog extension} $Lf(G)$ of a plane 
graph $G$ is $(G\cup \mathcal{R}(G))^{*}$. In the case of cubic $G$,
it can be viewed as obtained from $G$  by replacing every $v\in V(G)$ by
a hexagon $C_6(v)$, with $C_6(v)$ and $C_6(w)$ sharing an edge if and 
only if $vw\in E(G)$; and these hexagons are faces of $Lf(G)$. 
\end{description}
\end{definition}

\begin{oldtheorem}~$(${\rm\cite[Theorem~23]{FleischnerEnvelope}}$)$\label{TH:Lf.NP-C} 
The question of whether the leapfrog extension of a plane cubic graph 
with multiple edges is hamiltonian is NP-complete.
\end{oldtheorem}

We note in passing that we call leapfrog extension, 
what is called in other papers  vertex envelope, or leapfrog construction, or leapfrog operation, or  leapfrog transformation (see e.g.~\cite{FleischnerEnvelope,Fowler,Kardos,Yoshida}). 

\begin{definition}
Let $H$ be a  $2-$connected plane graph, let $U\subset V(H)$, and let $\mathcal{T}\subset \mathcal{F}(H)$ be a set of bounded faces. We define a subgraph $H_{\mathcal{T}}$ of $H$  by 
$H_{\mathcal{T}}=\big\langle \cup_{F\in \mathcal{T}}E(F)\big\rangle$.
If for every $x\in V(H)\setminus U$, $\big|\big\{F\in \mathcal{T}\ :\ x\in V(F)\big\}\big|=\frac{1}{2} \deg_H(x)$
and if $\mathcal{R}(U,\mathcal{T})$ is a tree, then we call $\mathcal{T}$
a {\sf quasi spanning tree of faces} of $H$, and the vertices in 
$U\ (V(H)\setminus U)$ are called {\sf proper} $(${\sf quasi}$)$ vertices.
If $U=V(H)$, then $\mathcal{T}$ is called a {\sf spanning tree of faces}.
\end{definition}

If a plane graph has a face-coloring with color set $X$, the faces of color $x\in X$ will then be called $x-$faces.

\begin{observation}\label{obs:A-trail}
We observe that if $H$ is a  plane eulerian graph with $\delta(H)\ge 4$
having an $A-$trail $L$, then  $L$ defines 
uniquely a quasi spanning tree of faces as follows. 
Starting with a 
$2-$face-coloring of $H$ with colors $1$ and $2$, suppose the outer face 
of $H$ is colored $1$.    Let $V_L(H)=\{V_1, V_2\}$ be the  
partition of $V(H)$ induced by $L$.
Now, the set of all $2-$faces  defines a quasi spanning tree of
faces $\mathcal{T}$ with $V_1$ being the set of all quasi vertices of  
$\mathcal{T}$.
Conversely, a $($quasi$)$ spanning tree of faces $\mathcal{T}$ defines
uniquely an $A-$trail in $H_\mathcal{T}$.
\end{observation}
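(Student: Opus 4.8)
The plan is to reduce everything to the local structure of an $A$-trail at a single vertex and then to a global topological argument that exploits the fact that $L$ is a \emph{single} closed trail. First I would analyze the transition system of $L$ at an arbitrary vertex $v$, where $\deg(v)=2k\ge 4$. Since $L$ is eulerian it visits $v$ exactly $k$ times, and each visit pairs the entering and leaving edge into a transition; together these $k$ transitions form a perfect matching of $E(v)$. The $A$-trail condition forces every matched pair to be adjacent in the rotation $O^{+}(v)=\langle e_1,\dots,e_{2k}\rangle$. A perfect matching of an even cyclic sequence into rotation-adjacent pairs is one of exactly two alternating matchings, and these two correspond precisely to ``all transitions turn through a $1$-face'' and ``all transitions turn through a $2$-face.'' Hence every vertex is placed in exactly one of $V_1,V_2$, so $V_L(H)=\{V_1,V_2\}$ is a well-defined partition. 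Moreover, for $x\in V_1$ each of its $\tfrac12\deg(x)$ transitions turns through a distinct $2$-corner, and since exactly $\tfrac12\deg(x)$ of the corners at $x$ are $2$-corners, $x$ lies on exactly $\tfrac12\deg(x)$ faces of $\mathcal{T}=\mathcal{F}_2$. This is the quasi-vertex degree condition with $U=V_2$. (Here I would note the mild caveat that distinct corners at $x$ lie on distinct faces, which holds for the simple $2$-connected graphs under consideration.)

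For the tree property I would split $H$ at every vertex according to the transition system of $L$, replacing each $v$ by $k=\tfrac12\deg(v)$ vertices of degree $2$, one per transition pair. Because $L$ is a single closed trail the result is one cycle, and because $L$ is an $A$-trail this splitting is realizable in the plane without crossings, so it is a single simple closed curve $\widetilde{\gamma}$, a Jordan curve. The key observation is that $\widetilde{\gamma}$ separates the $2$-faces from the $1$-faces: along each edge the two incident faces have opposite colours, and the splitting glues together same-coloured corners through the former vertices, so moving within one side of $\widetilde{\gamma}$ never changes the colour of the region one is in. Consequently one side of $\widetilde{\gamma}$ is exactly the union of the split $2$-faces, glued to one another at the vertices of $V_2$, and its incidence pattern is precisely $\mathcal{R}(V_2,\mathcal{T})$. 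Since one side of a Jordan curve is an open disk, it is connected and simply connected, and deformation-retracting the faces to points and the incidences to arcs shows that $\mathcal{R}(V_2,\mathcal{T})$ is connected and acyclic, i.e.\ a tree. Thus $\mathcal{T}$ is a quasi spanning tree of faces with quasi-vertex set $V_1$.

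For the converse I would run the same correspondence backwards. Given a (quasi) spanning tree of faces $\mathcal{T}$ with proper-vertex set $U$, prescribe at each quasi vertex the transition system that turns through the faces of $\mathcal{T}$ and at each proper vertex the complementary one; the quasi-vertex degree condition guarantees these are genuine matchings of $E_{H_{\mathcal{T}}}(x)$, so they define a transition system on $H_{\mathcal{T}}$ whose splitting is a disjoint union of simple closed curves. This union reduces to a single curve exactly when the space obtained by gluing the faces of $\mathcal{T}$ at the vertices of $U$—which is homotopy equivalent to $\mathcal{R}(U,\mathcal{T})$—is connected and simply connected, that is, exactly when $\mathcal{R}(U,\mathcal{T})$ is a tree. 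Hence the prescribed transitions yield a single closed trail, necessarily an $A$-trail of $H_{\mathcal{T}}$, and uniqueness is immediate because $\mathcal{T}$ forces the transition at every vertex. The main obstacle throughout is making the topological step rigorous—showing that the split curve is a single Jordan curve that cleanly separates the two colour classes, and that simple-connectivity of the resulting disk is equivalent to $\mathcal{R}(U,\mathcal{T})$ being a tree—rather than the essentially bookkeeping verification of the degree conditions.
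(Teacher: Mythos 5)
The paper states this Observation without any proof (it is offered as an observation, supported only by a pointer to the $A$-trail literature in~\cite{Fleischner}), so there is no in-paper argument to compare against; your write-up supplies the missing one. Your forward direction is sound and is the canonical argument: the two alternating matchings of rotation-adjacent pairs at each vertex show that $\{V_1,V_2\}$ is a genuine partition and give the quasi-degree count; the non-crossing split of $L$ along its transition system yields a single Jordan curve; Jordan--Schoenflies makes one side an open disk; and collapsing the glued $2$-faces shows $\mathcal{R}(V_2,\mathcal{T})$ is connected and acyclic. Your caveat that distinct corners at a vertex lie on distinct faces (i.e.\ $2$-connectedness, so that face boundaries are cycles) is exactly the right hypothesis to flag, since in the paper's setting $H$ may have digons but is $2$-connected.

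The converse, however, contains a genuine gap: the assertion that ``the quasi-vertex degree condition guarantees these are genuine matchings of $E_{H_{\mathcal{T}}}(x)$'' is false under the paper's bare definition. The degree condition counts \emph{faces}, not corners, and nothing in the definition prevents two faces of $\mathcal{T}$ from sharing an edge incident with a quasi vertex; that edge would then have to serve in two transitions, and the vertex has odd degree in $H_{\mathcal{T}}$, so $H_{\mathcal{T}}$ admits no eulerian trail at all. Concretely, in the octahedron with poles $x,x'$ and equatorial $4$-cycle $y\,a\,y'\,b$, take $\mathcal{T}=\{xya,\ xby,\ x'ay',\ x'y'b\}$ and $U=\{y,b,y'\}$: each quasi vertex $x,a,x'$ lies on exactly $2=\tfrac12\deg$ faces of $\mathcal{T}$, and $\mathcal{R}(U,\mathcal{T})$ is the path $xya$, $y$, $xby$, $b$, $x'y'b$, $y'$, $x'ay'$, a tree; yet $xya$ and $xby$ share the edge $xy$, so $\deg_{H_{\mathcal{T}}}(x)=3$ and no $A$-trail exists. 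Thus the converse (as you argue it, and indeed as the Observation literally states it) needs the additional hypothesis that no two faces of $\mathcal{T}$ share an edge --- which holds automatically whenever $\mathcal{T}$ lies inside one color class of the $2$-face-coloring, as in the forward direction and in every application in the paper, since then the corners of the $\tfrac12\deg(x)$ quasi-faces at $x$ partition $E(x)$ and every proper vertex gets even degree in $H_{\mathcal{T}}$. With that hypothesis made explicit, your boundary-tracing construction and the equivalence ``one closed curve iff $\mathcal{R}(U,\mathcal{T})$ is a tree'' go through; the uniqueness claim should then be read as uniqueness of the $A$-trail realizing the given proper/quasi assignment, because at a quasi vertex the complementary pinching is still $A$-trail-legal with respect to the embedding of $H_{\mathcal{T}}$ and so is not excluded by the transition condition alone.
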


The aforementioned relation between the concepts of $A-$trail  and 
(quasi) spanning tree of faces is not a coincidence. In fact, it had been shown
(\cite[pp.~$VI.112-VI.113$]{Fleischner})
that 

$\bullet$ {\it Barnette's Conjecture is true if and only if every simple $3-$connected eulerian 
triangulation of the plane admits an $A-$trail.}\\ 

We point out, however, that the concept of (quasi) spanning tree of faces
is a somewhat more general tool to deal with hamiltonian cycles in
plane graphs, than the concept of $A-$trails. 

We are thus focusing our considerations below on the 
complexity of the existence of $A-$trails and (quasi) spanning trees 
of faces in plane (eulerian) graphs.

Parts of this paper are the result of extracting
some results and their proofs of~\cite{Feder} which appear correct to 
all four of us; they have not been published yet.
Moreover, we relate some of the results of this paper to the theory of 
$A-$trails, as developed in~\cite{Fleischner}.

\medskip \noindent
In the sequel

\medskip \noindent
{\it $G$ always denotes a $3-$connected cubic plane graph 
having a facial $2-$factor $\mathcal{Q}$ $($i.e., a $2-$factor whose cycles are face 
boundaries of $G)$; we denote the set of face boundaries of $G$ not in 
$\mathcal{Q}$ by $\mathcal{Q}^{c}$.
 In general, when we say that $F$ is an $\mathcal{X}-$face, we mean that 
  $F\in \mathcal{X}$.
 Let $H$ always denote the reduced graph obtained from $G$ by contracting  the $\mathcal{Q}-$faces 
  to single vertices; i.e., $H=G/\mathcal{Q}$.
  \hfill $(\bf H)$}\\

Next, we list several results of a preceding joint paper
which will be essential for the current paper.

\begin{oldtheorem}~$(${\rm\cite[Proposition~1]{Bagheri,Feder}}$)$
\label{PR:1}
Let $G,\mathcal{Q},$ and $H$  be as stated in $(\bf H)$.
  The reduced
graph $H$ has a quasi spanning tree of faces, $\mathcal{T}$, with
 the external face not 
in $\mathcal{T}$ if and only if $G$
has a hamiltonian cycle $C$ with  the external $\mathcal{Q}^{c}-$face outside of $C$, 
with all $\mathcal{Q}-$faces corresponding to proper vertices  
of $\mathcal{T}$
inside of $C$, with 
all $\mathcal{Q}-$faces corresponding to quasi vertices 
of $\mathcal{T}$
outside of $C$, and such that no two $\mathcal{Q}^{c}-$faces sharing an edge are both 
inside of $C$.
\end{oldtheorem}

\begin{oldtheorem}~$(${\rm\cite[Corollary~7]{Bagheri}}$)$\label{cor:4con-eulerian-triangulation}
Every simple $4-$connected eulerian triangulation of the plane has a 
quasi  spanning tree of faces.
\end{oldtheorem}

\section{Polynomial and NP-Complete Problems}\label{Sec:Complexity}

In proving   Propositions $3$ and $6$, and Theorem $11$ 
in~\cite{Bagheri}, we used implicitly some 
algorithms to construct a (quasi) spanning tree of faces. In all of them, 
we find some triangular face such that the  graph 
resulting from the 
contraction of this face, still satisfies the 
hypothesis of the respective result.
 By repeating this 
process, finally the contracted faces together  with a 
special face form a (quasi) spanning tree of faces.
 Note that it is possible to identify the contractible faces in linear time, since every simple plane graph has $O(n)$ faces, where $n$ is the order of graph.
 Therefore, our 
algorithms for finding a quasi spanning tree of faces   in~\cite{Bagheri} 
are  polynomial.

We show next that one can decide in polynomial time whether the reduced graph $H$ has a spanning tree of
faces which are either digons or triangles. The result easily extends to the
case of a spanning tree of faces where all but a constant number of faces are either digons
or triangles.

{\bf The Spanning Tree Parity Problem:} {\it 
Given a graph $G$ and a collection of pairs of edges,
$\{\{e_i,f_i\}\ |\ i=1,\ldots, k\}$. {\sf The Spanning Tree Parity Problem} 
asks whether $G$ has a spanning tree $T$ satisfying $|\{e_i,f_i\}\cap E(T)|\in \{0,2\}$, for each $i=1,\ldots,k$.}

Note that the Spanning Tree Parity Problem is 
solvable in polynomial time (see~{\rm\cite{Gabow,Lovasz}}).

\begin{theorem}\label{TH:3}
Let $G,\mathcal{Q},$ and $H$  be as stated in $(\bf H)$.
 Let $D$ be the set
 of faces in $H$ such that
all faces in $\mathcal{D}$ are either digons or triangles. Then we can decide in polynomial time whether $H$ has a spanning
tree of faces in $D$, giving a hamiltonian cycle for $G$, by a spanning tree
parity algorithm.
\end{theorem}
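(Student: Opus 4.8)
The plan is to reduce the question to the Spanning Tree Parity Problem on an auxiliary multigraph $\Omega$ built on the vertex set $V(H)$. Recall that a set $\mathcal{T}\subseteq\mathcal{D}$ is a spanning tree of faces precisely when the restricted radial graph $\mathcal{R}(V(H),\mathcal{T})$ is a tree; since here $U=V(H)$, the degree condition on quasi vertices is vacuous, so the only requirement is that this induced bipartite graph be connected and acyclic. I would first record the natural reading of this condition as a ``hypertree'': each digon face behaves like an edge joining its two boundary vertices, while each triangular face behaves like a claw whose centre (the face vertex) is joined to its three boundary vertices.

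First I would translate the claw of a triangle into ordinary edges by \emph{smoothing}. In any tree, a degree-$3$ vertex $t$ with neighbours $a,b,c$ may be deleted and its three incident edges replaced by any two of the three edges $ab,bc,ca$; the result is again a tree, and conversely such a $2$-edge configuration expands back to a claw. A degree-$2$ face vertex (a digon) smooths to a single edge in the same way. Hence $\mathcal{R}(V(H),\mathcal{T})$ is a tree if and only if the edge set obtained by replacing every digon face in $\mathcal{T}$ by its edge and every triangular face in $\mathcal{T}$ by two of its three edges is a spanning tree of $V(H)$.

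Building on this, I would define $\Omega$ as follows. For every digon face $\{a,b\}\in\mathcal{D}$ put a single unconstrained edge $ab$ into $\Omega$. For every triangular face $T=\{a,b,c\}\in\mathcal{D}$ fix a middle vertex, say $b$, introduce two fresh parallel edges $e_T=ab$ and $f_T=bc$, and declare $\{e_T,f_T\}$ a parity pair. Because each triangle contributes its own private pair, all parity pairs are pairwise disjoint, as required. I would then claim that $H$ has a spanning tree of faces in $\mathcal{D}$ if and only if $\Omega$ has a spanning tree $S$ with $|\{e_T,f_T\}\cap E(S)|\in\{0,2\}$ for every triangular $T$: reading off $\mathcal{T}$ as the digons whose edge lies in $S$ together with the triangles $T$ with $e_T,f_T\in S$, and applying the smoothing equivalence in both directions, yields the correspondence. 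Since $\Omega$ and its pairs are built in linear time and the Spanning Tree Parity Problem is solvable in polynomial time~\cite{Gabow,Lovasz}, the whole procedure is polynomial; the extension to a constant number of larger faces then follows by brute-forcing the $2^{O(1)}$ choices of which exceptional faces lie in $\mathcal{T}$ and solving the resulting parity instance for each choice.

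The main obstacle I expect is justifying the smoothing step cleanly, in particular the point that for a triangular face the \emph{specific} choice of two of its three edges is immaterial. The key observation is that adding two edges among $a,b,c$ to the current forest creates a cycle if and only if at least two of $a,b,c$ already lie in a common component, and this condition does not depend on which two of the three edges are used; in all acyclic cases the two edges merge exactly the same vertices. Verifying this independence, and checking that the expansion of the parity spanning tree is \emph{exactly} the induced subgraph $\langle V(H)\cup\mathcal{T}\rangle_{\mathcal{R}(H)}$ (so that no spurious incidences are introduced and parallel faces are handled correctly), is the technical heart of the argument.
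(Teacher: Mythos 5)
Your proposal is correct and is essentially the paper's own reduction: the paper likewise builds an auxiliary graph on $V(H)$ with one edge per digon and two (arbitrarily but fixedly chosen) edges per triangle, constrained as a both-or-none pair, and invokes the polynomial-time Spanning Tree Parity algorithm of~\cite{Gabow,Lovasz}. Your explicit smoothing lemma and the verification that the choice of two triangle edges is immaterial merely supply details the paper leaves implicit, so the two arguments coincide in substance.
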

\begin{proof}{ Construct a graph $H^{'}$ related to $H$ as follows. $V(H^{'})=V(H)$. If $xyx$ is a digon in $\mathcal{D}$, then let  $xy$ be an edge in $H^{'}$ . If $xyzx$ is a triangle
in $\mathcal{D}$, then put edges $xy$ and $yz$ in $H^{'}$
(the naming of the  vertices of the triangle with the symbols $x,y,z$ is arbitrary but fixed). A spanning tree of faces in $\mathcal{D}$ for H then corresponds
to a  spanning tree in $H^{'}$ which must contain either both or none of 
 the edges $xy$ and
$yz$ corresponding to  the triangle $xyzx$ in $\mathcal{D}$. Thus, these conditions on pairs of edges in $H^{'}$ 
transform the problem of finding a spanning tree of faces  
 in $\mathcal{D}$ for $H$ equivalently in polynomial time into a Spanning Tree Parity Problem in $H^{'}$.
}\end{proof}

If $\mathcal{D}$ contains faces with four or more sides, say a face $xyztx$, then we could include three
edges linking these four vertices, say $xy,\ yz,$ and $zt$, and require that a spanning tree  must contain
either all three or none of these three edges. Such a spanning tree triarity problem, as we
shall  see later, turns out to be NP-complete.

The following is immediate from Theorem~\ref{PR:1}.
 
\begin{theorem}~$(${\rm\cite[Proposition~3]{Feder}}$)$\label{PR:3} Let $G$ be a $3-$connected cubic plane bipartite graph whose faces are $3-$colored with color set $\{1,2,3\}$ and the outer face of $G$ is a $3-$face. Then the following statements are equivalent.

\begin{description}

\item[$(i)$]  $G$ has a hamiltonian cycle $C$ with the $2-$faces  lying inside of $C$, the $3-$faces lying outside of $C$, and $1-$faces on either side;
\item[$(ii)$] the reduced graph $H$ obtained by contracting  the
$1-$faces has an $A-$trail;
\item[$(iii)$] the reduced graph $H^{'}$ obtained by
contracting  the $2-$faces has a spanning tree of $1-$faces;
\item[$(iv)$] the reduced graph $H^{''}$
obtained by contracting  the $3-$faces has a spanning tree of $1-$faces.
\end{description}
\end{theorem}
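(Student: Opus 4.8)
The plan is to derive all three equivalences from Theorem~\ref{PR:1} by applying it to each of the three color classes, after first observing that these classes are facial $2$-factors. Since $G$ is cubic, the three faces meeting at any vertex $v$ pairwise share an edge at $v$, so in a proper $3$-face-coloring they receive the three distinct colors $1,2,3$; hence every vertex lies on exactly one $i$-face for each $i$, and the set $\mathcal{Q}_i$ of all $i$-faces is a facial $2$-factor of $G$. Thus $H=G/\mathcal{Q}_1$, $H^{'}=G/\mathcal{Q}_2$ and $H^{''}=G/\mathcal{Q}_3$ are each of the form $G/\mathcal{Q}$ required in $(\mathbf{H})$, and Theorem~\ref{PR:1} is available for each choice $\mathcal{Q}=\mathcal{Q}_i$. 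I would also record that the edges surviving the contraction $G/\mathcal{Q}_1$ are precisely the $\{2,3\}$-edges of $G$ (the third edge at each vertex of a $1$-face), so that the faces of $H$ are exactly the images of the $2$-faces and $3$-faces, and two such faces are adjacent in $H$ iff they are a $2$-face and a $3$-face sharing a $\{2,3\}$-edge. In particular the $2$-faces are pairwise non-adjacent in $H$, as are the $3$-faces.

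Next I would prove $(i)\Leftrightarrow(iii)$ by applying Theorem~\ref{PR:1} with $\mathcal{Q}=\mathcal{Q}_2$, so that $\mathcal{Q}^{c}$ consists of the $1$- and $3$-faces and the external $3$-face is a $\mathcal{Q}^{c}$-face as required. A \emph{spanning} tree of $1$-faces is the special case of a quasi spanning tree of faces in which $U=V(H^{'})$ (all vertices proper) and $\mathcal{T}$ contains only $1$-faces. Under the correspondence of Theorem~\ref{PR:1} the ``all proper'' condition translates into ``all $\mathcal{Q}_2$-faces inside $C$'', i.e.\ all $2$-faces inside; the requirement that $\mathcal{T}$ use no $3$-face translates into ``all $3$-faces outside $C$'' (the inside $\mathcal{Q}^{c}$-faces are exactly the tree faces); and the $1$-faces are then split according to their membership in $\mathcal{T}$, giving $1$-faces on either side. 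Conversely a cycle as in $(i)$ forces the inside $\mathcal{Q}^{c}$-faces to be exactly a set of $1$-faces that, by Theorem~\ref{PR:1}, form a spanning tree of faces of $H^{'}$. This is the routine but careful half of the argument.

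For $(i)\Leftrightarrow(iv)$ I would exploit the symmetry that interchanges colors $2$ and $3$ together with the interchange of $int(C)$ and $ext(C)$: statement $(i)$ is invariant under this combined swap, while it turns the analysis of $H^{'}=G/\mathcal{Q}_2$ into the corresponding analysis of $H^{''}=G/\mathcal{Q}_3$. Concretely, since the $3$-faces lie outside $C$ in $(i)$, I cannot apply Theorem~\ref{PR:1} to $\mathcal{Q}_3$ in the given embedding, because the external face is itself a $3$-face and would be contracted. The clean fix is to re-embed $G$ (using the embedding-independence of the face structure of a $3$-connected planar graph) so that a $2$-face is external; then $\mathcal{Q}_3^{c}$ contains the external face, Theorem~\ref{PR:1} applies, and a spanning tree of $1$-faces in $H^{''}$ corresponds to the same hamiltonian cycle viewed from the opposite side. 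I expect this embedding/orientation bookkeeping to be the main obstacle, since it is exactly the point where the hypothesis ``the outer face is a $3$-face'' breaks the naive symmetry between $(iii)$ and $(iv)$.

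Finally, for $(i)\Leftrightarrow(ii)$ I would route through Observation~\ref{obs:A-trail} applied to $H=G/\mathcal{Q}_1$. First I would check that $H$ is eulerian with $\delta(H)\ge 4$: each vertex of $H$ comes from a $1$-face whose length is even and at least $4$ (as $G$ is simple and bipartite), and its degree in $H$ equals that length. Observation~\ref{obs:A-trail} then gives the bijection between $A$-trails of $H$ and quasi spanning trees of faces of $H$. The key verification is that the $2$-face-coloring of $H$ used there coincides with the partition of $\mathcal{F}(H)$ into images of $2$-faces and of $3$-faces of $G$ --- which follows from the adjacency description in the first paragraph --- and that, with the external $3$-face playing the role of color $1$, the quasi spanning tree produced consists exactly of the $2$-faces. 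Feeding this quasi spanning tree into Theorem~\ref{PR:1} with $\mathcal{Q}=\mathcal{Q}_1$ then yields a hamiltonian cycle with the $2$-faces inside, the $3$-faces outside, and the $1$-faces split by proper/quasi status, i.e.\ exactly $(i)$; reversing each step gives the converse.
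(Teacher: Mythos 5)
Your argument is essentially correct, but it takes a genuinely different route from the paper's written proof. The paper proves the equivalences directly, with no appeal to Theorem~\ref{PR:1}: for $(i)\Rightarrow(ii)$ it shows the closed trail $T_C$ induced in $H$ by $C$ must be eulerian (otherwise a $2-$face and a $3-$face would lie on the same side of $C$) and that $E(F_1)\cap E(C)$ is a matching for every $1-$face $F_1$, which is exactly the $A-$trail condition; for $(i)\Leftrightarrow(iii)$ it observes that $G_{int}=C\cup int(C)$ is a spanning outerplane subgraph of $G$ whose weak dual is a tree, so the reduced graph of $G_{int}$ after contracting the $2-$faces is the desired spanning tree of $1-$faces, with the converse obtained from the unique $A-$trail of $H^{'}_{\mathcal{T}}$; and $(i)\Leftrightarrow(iv)$ is handled symmetrically via $G_{ext}=C\cup ext(C)$ --- note that the exterior is treated directly as an outerplanar graph, so the paper never needs your re-embedding manoeuvre for $(iv)$. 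Your uniform derivation from Theorem~\ref{PR:1} and Observation~\ref{obs:A-trail} actually matches the paper's own remark that the theorem ``is immediate from Theorem~\ref{PR:1}''; what the paper's direct proof buys is self-containedness and the explicit weak-dual mechanism, while yours buys uniformity across the three color classes.

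One caveat you should make explicit: your argument repeatedly uses the parenthetical claim that in the correspondence of Theorem~\ref{PR:1} the $\mathcal{Q}^{c}-$faces inside $C$ are \emph{exactly} the tree faces. That refinement is true of the cycle constructed in the proof of Theorem~\ref{PR:1} in the cited source, but it is not implied by the statement as quoted, whose cycle-side conditions only forbid two adjacent $\mathcal{Q}^{c}-$faces from both lying inside $C$. Indeed, taking $G$ the cube with opposite faces colored alike, $\mathcal{Q}=\mathcal{Q}_2$, and $C$ the hamiltonian cycle bounding the union of the two $2-$faces and one $3-$face, all stated conditions of Theorem~\ref{PR:1} hold while a $3-$face lies inside $C$, so statement $(i)$ fails for this cycle; hence Theorem~\ref{PR:1} used as a pure black box is too weak to force ``all $3-$faces outside,'' and you must invoke its proof (or the paper's direct outerplanarity argument) at this point. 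A second, smaller gap in your $(i)\Leftrightarrow(ii)$ step: the converse half of Observation~\ref{obs:A-trail} produces an $A-$trail of $H_{\mathcal{T}}$, not of $H$; you need the additional remark that when $\mathcal{T}$ consists of \emph{all} faces of one color class of $H=G/\mathcal{Q}_1$, every edge of $H$ borders exactly one such face, so $H_{\mathcal{T}}=H$ --- you use this implicitly, and it should be stated.
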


\begin{proof} $(i)\Rightarrow(ii):$ Let $T_C$ be a 
closed trail in $H$ induced by 
hamiltonian cycle $C$ of $G$. The 
closed trail $T_C$ is an eulerian trail, otherwise there are two  faces of $G$ with two different colors $2$ and $3$ lying on one side of $C$. Since all $2-$faces ($3-$faces) of $G$ are lying inside (outside) of $C$, for every $1-$face $F_1$ of $G$ we conclude that $E(F_1)\cap E(C)$ is a matching. Thus, $T_C$ is an $A-$trail.\\
$(ii)\Rightarrow(i):$ It is easy to see that any $A-$trail of $H$ can be transformed into a hamiltonian cycle $C$ of $G$ with the $2-$faces  lying inside of $C$, the $3-$faces lying outside of $C$, and $1-$faces lying on either side.\\
$(i)\Rightarrow(iii):$  Let $U= V(H^{'})$ be the vertex set corresponding to the $2-$faces. Also, let $\mathcal{T}$ be the set of  $1-$faces
of $H^{'}$ corresponding to the $1-$faces in $int(C)$.

Observe that $G_{int}:=C\cup int(C)$
is a spanning outerplane subgraph of $G$, and that the weak dual
(the subgraph of the dual graph whose vertices correspond to the bounded faces) of 
$G_{int}$ is a tree (see~\cite{Fleischner}). Therefore, 
$H^{'}_{int}\subset H^{'}$ being the reduced graph of $G_{int}$ 
after contracting the $2-$faces, is a spanning tree of faces in $H^{'}$.
\\
$(iii)\Rightarrow(i):$   Suppose $H^{'}$ has a spanning tree of 
 $1-$faces $\mathcal{T}$.   
 Then $H^{'}_{\mathcal{T}}$ has a unique $A-$trail which can be transformed    into a hamiltonian cycle $C$ of $G$ such that
 the $2-$faces 
 (corresponding to $V(H^{'})$) lie in $int(C)$ and
the  corresponding $3-$faces   lie in $ext(C)$. 

The equivalence of $(i)$ and $(iv)$ is established analogously by looking at $G_{ext}:=C\cup ext(C)$ which is also an outerplanar graph.
\end{proof}

An application of Theorems~\ref{TH:3} and~\ref{PR:3} 
yields the following.

\begin{corollary}\label{CR:2} Let $G$ be a cubic plane bipartite graph 
with a $3-$face-coloring with color set $\{1,2,
3\}$, and let $H$ be the reduced graph obtained by contracting 
 the $1-$faces. Suppose all vertices of $H$ have degree $4$ or $6$.
%, so that all green faces of G are either squares or hexagons. 
Then one can decide in polynomial time whether $H$ has an $A-$trail
 which in turn yields a hamiltonian cycle in $G$.
\end{corollary}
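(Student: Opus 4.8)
The plan is to turn the question into the polynomial-time Spanning Tree Parity Problem by chaining Theorems~\ref{PR:3} and~\ref{TH:3}; the only real content is to check that the reduction produces a graph whose relevant faces are exclusively digons and triangles. I would begin by reading off what the degree hypothesis tells us about the $1$-faces of $G$. Since $G$ is cubic, every vertex lying on a $1$-face $F$ has exactly two of its three incident edges on the boundary of $F$ and exactly one edge leaving $F$; hence after contracting $F$ the resulting vertex of $H$ has degree equal to the length of $F$. Thus the assumption $\deg_H(x)\in\{4,6\}$ says precisely that every $1$-face of $G$ is a quadrilateral or a hexagon.

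The geometric heart of the argument is the following local observation. Because $G$ is cubic and $3$-face-colored, the three faces incident to any vertex $v$ carry the three distinct colors $1,2,3$; so for a $1$-face $F$ the two boundary edges of $F$ meeting at $v$ separate $F$ from a $2$-face on one side and from a $3$-face on the other. Consequently the boundary edges of $F$ alternate between being shared with $2$-faces and with $3$-faces. I would then form $H^{'}$ by contracting the $2$-faces, as in statement $(iii)$ of Theorem~\ref{PR:3}: this collapses exactly the $2$-adjacent edges of $F$ to loops while leaving the $3$-adjacent edges intact. Carrying this out edge by edge shows that a quadrilateral $1$-face becomes a digon of $H^{'}$ (two parallel surviving edges) and a hexagonal $1$-face becomes a triangle of $H^{'}$ (three surviving edges in a cycle).

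With that in hand the proof closes quickly. By the equivalence $(ii)\Leftrightarrow(iii)$ of Theorem~\ref{PR:3}, $H$ has an $A$-trail if and only if $H^{'}$ has a spanning tree of $1$-faces; and since every such $1$-face of $H^{'}$ is a digon or a triangle, Theorem~\ref{TH:3} lets us decide the existence of this spanning tree of faces in polynomial time via the spanning tree parity algorithm. The equivalence $(i)\Leftrightarrow(ii)$ then converts the $A$-trail into the required hamiltonian cycle of $G$. It is worth emphasizing that the restriction to degrees $4$ and $6$ is exactly what keeps us in the tractable regime: a $1$-face of length $8$ would contract to a quadrilateral face of $H^{'}$, pushing the problem into the spanning tree triarity setting already flagged as NP-complete.

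The main obstacle I anticipate is bookkeeping around hypotheses rather than the idea itself. Theorem~\ref{PR:3} is stated for $3$-connected $G$ whose outer face is a $3$-face, while the corollary only asks for a cubic plane bipartite $G$; I would need either to verify that the single equivalence $(ii)\Leftrightarrow(iii)$ we actually invoke survives without $3$-connectivity, or to reduce to that case. A secondary, purely local point to watch is degeneracy: I must confirm that the two (respectively three) contracted $2$-faces bounding a quadrilateral (respectively hexagonal) $1$-face are genuinely distinct, so that the resulting digon (respectively triangle) is an honest face of $H^{'}$ rather than a collapsed object.
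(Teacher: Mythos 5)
Your proposal is correct and follows essentially the same route as the paper's own proof: contract the $2$-faces to obtain $H^{'}$, observe that the quadrilateral and hexagonal $1$-faces (forced by the degree-$4$/$6$ hypothesis) become digons and triangles, and then chain the equivalence $(ii)\Leftrightarrow(iii)$ of Theorem~\ref{PR:3} with the parity algorithm of Theorem~\ref{TH:3}. Your two cautionary points (the $3$-connectivity/outer-face hypotheses of Theorem~\ref{PR:3}, and non-degeneracy of the resulting digons and triangles) are legitimate bookkeeping items that the paper itself passes over silently, so they reflect care rather than a gap in your argument.
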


\begin{proof}{ Let $H^{'}$ be the reduced graph of $G$ obtained by contracting the $2-$faces instead
of the $1-$faces. Then  each $1-$face of $G$ yields a 
digon or triangle in $H^{'}$. By Theorem~\ref{PR:3}, an
$A-$trail  in $H$ corresponds to a spanning tree of $1-$faces in $H^{'}$. Since all
$1-$faces  of $H^{'}$ are either digons or triangles, one can decide
in polynomial time by Theorem~\ref{TH:3}, whether such a spanning tree
of $1-$faces exists in $H^{'}$.
}\end{proof}

By Observation~\ref{obs:A-trail}, we have the following theorem 
in which we make use of the fact that an 
(eulerian) triangulation of the plane admits two interpretations, namely: as the dual of a plane cubic   (bipartite) graph, and as the contraction of a facial (even) $2-$factor $Q$ in $G$ whose faces in $Q^c$ are hexagons. Also note that a plane bipartite cubic graph  $G$ has a hamiltonian cycle if and only if the dual graph $G^*$ has a non-separating $A-$trail~\cite{Fleischner}.
\begin{theorem}\label{TH:Herbert}
	 Let $G$ be a  $3-$connected cubic planar bipartite graph and let $\mathcal{F}$ be the set of its faces. Let  
$\mathcal{Q}_{\mathcal{F}}$ be the facial $2-$factor of $Lf(G)$ 
corresponding to $\mathcal{F}$ and let the color classes of the $3-$face-coloring of $Lf(G)$ be denoted by $F_1$, $F_2$, and $F_3$ such that $F_3 = \mathcal{Q}_{\mathcal{F}}$, and thus $F_1$, $F_2$ translates into a $2-$face-coloring of $Lf(G)/\mathcal{Q}_{\mathcal{F}}$  denoting the corresponding sets of faces by $F1$, $F2$ and whose vertex set $($corresponding to $F_3)$ be denoted by $V_3$. Then the following is true.
\begin{description}
\item[$(1)$]
 $G^{*} = Lf(G)/\mathcal{Q}_{\mathcal{F}}.$
\item[$(2)$] $G$ is hamiltonian if and only if $Lf(G)$ has a hamiltonian cycle $C$ such that $int(C) = F_1\cup F_3^{'}$ and $ext(C) = F_2\cup F_3^{''}$ where $F_3 = F_3^{'}\dot{\cup} F_3^{''}$.\\
\\
Statement $(2)$ is equivalent to
\item[$(3)$] $G^{*}$ has a non-separating  $A-$trail if and only if
\begin{itemize}
\item[$(i)$]
  $Lf(G)/\mathcal{Q}_{\mathcal{F}}$ has a quasi spanning tree of faces containing all of F1 and where $V_3^{'}$ is its set of proper vertices and $V_3^{''}$ is its set of quasi vertices; and
\item[$(ii)$]
 $Lf(G)/\mathcal{Q}_{\mathcal{F}}$ has a quasi spanning tree of faces containing all of $F2$ and where $V_3^{''}$ is its set of proper vertices and $V_3^{'}$ is its set of quasi vertices.
$(V_3^{'}$ and $V_3^{''}$ are the vertex sets in  $Lf(G)/\mathcal{Q}_{\mathcal{F}}$ corresponding to $F_3^{'}$ and $F_3^{''}$, respectively.$)$ 
\end{itemize}
\end{description} 
\end{theorem}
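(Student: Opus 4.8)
The plan is to read the whole statement as one chain of equivalences obtained by translating each concept into the next, anchored by the identity in $(1)$, by Observation~\ref{obs:A-trail}, by Theorem~\ref{PR:1}, and by the cited equivalence that $G$ is hamiltonian iff $G^{*}$ has a non-separating $A$-trail. First I would pin down the combinatorial picture of $Lf(G)$. Writing $V(G)=A\,\dot{\cup}\,B$ for the bipartition, the faces of $Lf(G)=(G\cup\mathcal{R}(G))^{*}$ are exactly the hexagons $C_6(v)$, one per $v\in V(G)$, together with one $2k$-gon per $2k$-face of $G$; since face--adjacency of $Lf(G)$ is governed by $G\cup\mathcal{R}(G)$, colouring the hexagons over $A$ with $1$, those over $B$ with $2$, and every face coming from $\mathcal{F}$ with $3$ is a proper $3$-face-colouring. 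This identifies $F_1,F_2$ as the two hexagon classes and $F_3=\mathcal{Q}_{\mathcal{F}}$ as the facial $2$-factor made of the faces coming from $\mathcal{F}$, since each vertex of $Lf(G)$ lies on exactly one of them. I would also check that $Lf(G)$ is a $3$-connected cubic plane bipartite graph so that $(\bf H)$ and Theorem~\ref{PR:1} apply to it with $\mathcal{Q}_{\mathcal{F}}$ in the role of $\mathcal{Q}$.

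For $(1)$ I would use that contracting the $F_3$-faces of the dual $(G\cup\mathcal{R}(G))^{*}$ is the operation dual to deleting the face--vertices $\mathcal{F}$ together with their radial edges from $G\cup\mathcal{R}(G)$; the latter operation returns $G$, so $Lf(G)/\mathcal{Q}_{\mathcal{F}}=G^{*}$. Because $G$ is cubic and bipartite, $G^{*}$ is an eulerian triangulation, hence $\delta(G^{*})\ge 4$ and Observation~\ref{obs:A-trail} applies; its triangular faces carry the induced $2$-face-colouring $F1$ (over $A$) and $F2$ (over $B$), and its vertex set is $V_3$. This is the reduced graph $H=Lf(G)/\mathcal{Q}_{\mathcal{F}}$ of $(\bf H)$, with $\mathcal{Q}^{c}=F_1\cup F_2$ the hexagons.

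Next I would establish $(3)$ and, with it, the equivalence to $(2)$. Let $L$ be a non-separating $A$-trail of $G^{*}$. At each vertex $v$ the trail pairs the edges of $E(v)$ into cyclically consecutive pairs; because the faces around $v$ alternate in colour, this pairing is forced into one of exactly two parities, and the parity used at $v$ is what places $v$ into the induced partition $V_L(G^{*})=\{V_3',V_3''\}$ of $V_3$. The degree requirement for a quasi spanning tree of faces is automatic here, since in the $2$-face-coloured triangulation every vertex meets equally many $F1$- and $F2$-faces; so the entire content is that the restricted radial graphs $\mathcal{R}(V_3',F1)$ and $\mathcal{R}(V_3'',F2)$ are trees (the outer face, being unbounded, is excluded from each $\mathcal{T}$ by definition). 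Observation~\ref{obs:A-trail} already delivers one colour class as a quasi spanning tree of faces for a \emph{general} $A$-trail; the point is that the non-separating hypothesis --- every triangle supplying a consecutive pair at one of its corners --- is exactly what upgrades this to \emph{both} colour classes at once, giving $(i)$ with proper vertices $V_3'$ and $(ii)$ with proper vertices $V_3''$. Running Observation~\ref{obs:A-trail} backwards gives the converse, so $(3)$ holds.

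Finally I would close the loop to $(2)$ by applying Theorem~\ref{PR:1} to the triple $\bigl(Lf(G),\mathcal{Q}_{\mathcal{F}},G^{*}\bigr)$: the quasi spanning tree of faces from $(i)$ lifts to a hamiltonian cycle $C$ of $Lf(G)$ in which the $\mathcal{Q}_{\mathcal{F}}$-faces over the proper vertices $V_3'$ lie inside and those over the quasi vertices $V_3''$ lie outside, i.e. $F_3'\subseteq int(C)$ and $F_3''\subseteq ext(C)$; the hexagons then fill in as $F_1$ inside and $F_2$ outside, so $int(C)=F_1\cup F_3'$ and $ext(C)=F_2\cup F_3''$. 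The admissibility clause of Theorem~\ref{PR:1} (no two edge-sharing $\mathcal{Q}^{c}$-faces both inside) is free, since $int(C)$ contains all of $F_1$ and none of $F_2$ while adjacent hexagons always lie in opposite classes. Combining this with the cited equivalence ``$G$ hamiltonian $\Leftrightarrow$ $G^{*}$ has a non-separating $A$-trail'' yields $(2)$ and shows it to be the same chain of equivalences as $(3)$. The hard part I expect is the claim highlighted in the third paragraph: Observation~\ref{obs:A-trail} only hands over \emph{one} colour class as a quasi spanning tree of faces, so proving that \emph{non-separating} forces \emph{both} $\mathcal{R}(V_3',F1)$ and $\mathcal{R}(V_3'',F2)$ to be trees --- equivalently, that it forces the clean split $int(C)=F_1\cup F_3'$ --- is the genuine combinatorial core; the remainder is bookkeeping and translation through $(1)$, Observation~\ref{obs:A-trail}, and Theorem~\ref{PR:1}.
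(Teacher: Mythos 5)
Your proposal has the right architecture in outline, but it leaves unproven exactly the step that carries the mathematical weight, and you say so yourself. Your entire chain for both $(2)$ and $(3)$ passes through the claim that the non-separating hypothesis ``upgrades'' the one quasi spanning tree of faces delivered by Observation~\ref{obs:A-trail} to \emph{both} colour classes at once, i.e.\ that $\mathcal{R}(V_3^{'},F1)$ and $\mathcal{R}(V_3^{''},F2)$ are simultaneously trees. You assert this (``is exactly what upgrades this to both colour classes at once'') and flag it as the expected hard part, but you never argue connectivity or acyclicity of either restricted radial graph from the condition that every face boundary of the triangulation contains two consecutive edges of $L$. The converse is likewise not covered by ``running Observation~\ref{obs:A-trail} backwards'': the Observation's converse produces an $A$-trail only of $H_{\mathcal{T}}$ (which here does equal $G^{*}$ since every edge lies on an $F1$-face, but that needs saying), and it gives no information about the non-separating property, so deducing the left side of $(3)$ from $(i)$ and $(ii)$ requires a separate verification that every $F2$-triangle acquires two consecutive edges in the induced trail. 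Without these arguments the proposal is a framework, not a proof.

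It is worth noting how the paper avoids this difficulty: it never proves the ``upgrade'' claim as a standalone combinatorial lemma. Instead it proves $(2)$ \emph{directly and constructively} --- given a hamiltonian cycle $C_0=e_1\ldots e_n$ of $G$, it builds a hamiltonian cycle $C$ of $Lf(G)$ by alternately splicing in shortest paths along the hexagons $C_6(v_i)$ on the inside and outside of $C_0$ (cf.\ Fleischner, pp.~VI.109--VI.110), and the partition $int(C)=F_1\cup F_3^{'}$, $ext(C)=F_2\cup F_3^{''}$ is read off from the construction. The equivalence with $(3)$ then follows from the quoted facts preceding the theorem (the two interpretations of eulerian triangulations, Observation~\ref{obs:A-trail}, and the cited equivalence between hamiltonicity of $G$ and non-separating $A$-trails in $G^{*}$), so the geometric cycle does the work your missing lemma would have to do. Your use of Theorem~\ref{PR:1} to translate between quasi spanning trees in $Lf(G)/\mathcal{Q}_{\mathcal{F}}$ and hamiltonian cycles in $Lf(G)$ is sound as far as it goes (including the observation that adjacent hexagons lie in opposite colour classes, which disposes of the admissibility clause), and your verifications of the face structure of $Lf(G)$ and of statement $(1)$ match the paper's; but to complete your route you would either have to prove the two-trees claim combinatorially or, as the paper does, replace it with the explicit lifting of $C_0$ to $Lf(G)$.
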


\begin{proof}
By Definition~\ref{DEF:Leapfrog} and definition of the dual graph of a plane graph,  statement $(1)$ is true.

Assume $G$ has a hamiltonian cycle $C_0=e_1e_2\ldots e_n$ such that $e_i=v_iv_{i+1}$ for $i=1,\ldots,n-1$.
Let $e=v_iv_j\in E(G)$ be the edge  corresponding to $e^{'}\in E(C_6(v_i))\cap E(C_6(v_j))\subset E(Lf(G))$, for $1\le i\neq j\le n$ (see Definition~\ref{DEF:Leapfrog} $(ii)$
concerning $C_6(v_i)$).

Now we construct a hamiltonian cycle $C$ in $Lf(G)$ corresponding to $C_0$ as follows.
Begin with $C=e_1^{'}$. Consider the shortest path in $Lf(G)$ between  the endvertex of $e_1^{'}$ lying inside of $C_0$ and the endvertex of $e_2^{'}$ lying inside  of $C_0$, and add this path to $C$. Then add $e_2$ and add
the shortest path in $Lf(G)$ between  the endvertex of $e_2^{'}$  lying outside of $C_0$ and the endvertex of $e_3^{'}$ lying outside  of $C_0$, to the already constructed $C$. Add $e_3^{'}$ to $C$ and continue this algorithm such that at the $n-$th step you must add the  shortest path in $Lf(G)$ between  the endvertex of $e_n^{'}$  lying outside of $C_0$ and the endvertex of $e_1^{'}$ lying outside  of $C_0$ to the $C$.
It is easy to check that $C$ is a hamiltonian cycle in $Lf(G)$  
such that $int(C) = F_1\cup F_3^{'}$ and $ext(C) = F_2\cup F_3^{''}$ where $F_3 = F_3^{'}\dot{\cup} F_3^{''}$ (cf.~\cite[pp.~$VI.109-VI.110$]{Fleischner}).

Conversely, it is straightforward to see that a hamiltonian cycle in $Lf(G)$ as described yields a hamiltonian cycle in $G$.

The remainder of the proof follows from the paragraph preceding the statement of the theorem.
\end{proof}

Theorem~\ref{TH:Herbert} puts hamiltonicity in $G$ in a qualitative perspective of the algorithmic complexity regarding quasi spanning trees of faces of a special type in 
the reduced graph  of the leapfrog extension of $G$.
In fact, if $\mathcal{G}$ is a class of $3-$connected cubic planar bipartite graphs where hamiltonicity can be decided in polynomial time, then the same can be said regarding 
special types of
quasi spanning trees of faces in the reduced graphs of the leapfrog extensions of  $\mathcal{G}$ (as stated in the theorem). For, given a hamiltonian cycle $C_0$ in $G\in \mathcal{G}$, a
non-separating  $A-$trail $L_{C_0}$ in $G^{*}$ can be found in polynomial time which in turn yields a quasi spanning tree of faces in $Lf(G)/\mathcal{Q}_{\mathcal{F}}$ as described in $(3)\ (i)$ and  $(3)\ (ii)$, respectively,
also in polynomial time. Compare this with Theorem~\ref{TH:Lf.NP-C} and Theorem~\ref{cor:4con-eulerian-triangulation}.

We now establish several NP-completeness results.

The question of whether a $3-$connected planar cubic graph $G_0$ has a hamiltonian
cycle is NP-complete, as shown by Garey et al.~\cite{Garey}. 
Let $e =uv\in E(G_0)$. Then the question of whether $G_0$ has a hamiltonian 
cycle traversing this specified edge $e$, is also 
NP-complete. Let $G_0^{'}=G_0\setminus \{e\}$. Thus, the question
of whether $G_0^{'}$ has a hamiltonian path from $u$ to $v$ is also NP-complete.

\begin{theorem}~$(${\rm\cite[Theorem~4]{Feder}}$)$\label{TH:4} 
Let $G$ be a $3-$connected cubic planar bipartite graph.
Let a $3-$face-coloring with color set $\{1,2,
3\}$ be given, and let $H$ be the reduced graph obtained by contracting the $1-$faces. Suppose that the $2-$faces 
correspond in $H$ to quadrilaterals 
and the $3-$faces correspond in $H$ 
to digons. Then the question of whether $H$ has a spanning tree of $2-$faces
is NP-complete.
\end{theorem}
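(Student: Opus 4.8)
The plan is to prove both membership in NP and NP-hardness. Membership is routine: given a candidate set $\mathcal{T}$ of $2$-faces one checks in polynomial time that $\mathcal{R}(V(H),\mathcal{T})$ is a tree and that every vertex of $H$ is covered the prescribed $\tfrac12\deg_H(x)$ times, so verification is polynomial. For hardness I would first record the observation already flagged after Theorem~\ref{TH:3}, namely that under the present hypotheses the spanning-tree-of-$2$-faces problem is literally a \emph{spanning tree triarity problem}: applying the construction from the proof of Theorem~\ref{TH:3} to $H$, each quadrilateral $2$-face $xyztx$ contributes a triple of edges $xy,yz,zt$ to an auxiliary graph $H^{\ast}$ on $V(H)$, with the requirement that a spanning tree contain all three or none of them, while each digon $3$-face contributes a single unconstrained edge. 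A spanning tree of $2$-faces of $H$ then corresponds exactly to a spanning tree of $H^{\ast}$ honoring these triarity constraints, so it suffices to exhibit an NP-complete family of instances $H$ of the required form.

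The second ingredient is the translation into Hamiltonicity. Swapping the colour labels $1$ and $2$ in Theorem~\ref{PR:3} (which leaves the hypothesis ``outer face is a $3$-face'' intact) turns statement $(iii)$ into the assertion that $H$, obtained by contracting the $1$-faces, has a spanning tree of $2$-faces, and turns statement $(i)$ into: $G$ has a hamiltonian cycle $C$ with all $1$-faces inside $C$, all $3$-faces outside $C$, and $2$-faces on either side. Hence the problem is equivalent to deciding this constrained Hamiltonicity in $G$. Note also that the two structural hypotheses pull back to $G$ as face-size constraints: contracting the $1$-faces turns a $2m$-gonal $2$-face into an $m$-gon and a $2m'$-gonal $3$-face into an $m'$-gon, so ``$2$-faces are quadrilaterals and $3$-faces are digons in $H$'' means precisely that the $2$-faces of $G$ are octagons and its $3$-faces are quadrilaterals.

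The core of the proof is therefore a reduction that, from an instance of a known NP-complete Hamiltonicity problem, builds such a $G$. I would start from the fact recorded just before the statement, that deciding whether $G_0' = G_0\setminus\{e\}$ has a hamiltonian $u$-$v$ path is NP-complete for $3$-connected cubic planar $G_0$ with $e=uv$. The plan is to replace the vertices and edges of $G_0'$ by bipartite planar gadgets so that the resulting $G$ is cubic, $3$-connected and bipartite, admits a $3$-face-colouring in which every $2$-face is an octagon and every $3$-face a quadrilateral, and has its outer face coloured $3$; the gadgets are wired so that the rigidity forced by requiring all $1$-faces inside and all $3$-faces outside channels the remaining freedom into a routing that traces a hamiltonian $u$-$v$ path in $G_0'$. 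Equivalently, at the level of $H^{\ast}$, each edge-gadget becomes a triarity triple whose all-or-none behaviour models ``the edge is / is not used by the path,'' and the spanning-tree condition enforces that the chosen edges form a single spanning path.

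The step I expect to be the main obstacle is exactly this gadget design together with the proof of its correctness. One must simultaneously meet the rigid local requirements (cubic, properly $3$-face-coloured, octagonal $2$-faces, quadrilateral $3$-faces) while keeping the global graph bipartite and $3$-connected, and then verify both directions of the equivalence: that a hamiltonian $u$-$v$ path in $G_0'$ yields a spanning tree of $2$-faces (equivalently a hamiltonian cycle of $G$ of the prescribed inside/outside type), and conversely that any such spanning tree decomposes into the intended gadget behaviour and hence returns a hamiltonian path. The polynomial size of $G$ and the purely local checks (bipartiteness, $3$-connectivity, face sizes, placement of the outer $3$-face) are straightforward once the gadget is fixed; the combinatorial heart is ruling out ``cheating'' solutions of the triarity/tree constraints that satisfy all balance conditions yet do not correspond to a genuine hamiltonian path.
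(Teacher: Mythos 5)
Your proposal sets the stage correctly (NP membership, the starting point that deciding whether $G_0'=G_0\setminus\{e\}$ has a hamiltonian $u$--$v$ path is NP-complete, and the translation via Theorem~\ref{PR:3} into a constrained hamiltonicity problem), but it stops exactly where the proof has to begin: you announce that one must design vertex- and edge-gadgets meeting the face-size, colouring, bipartiteness and $3$-connectivity constraints, and you explicitly defer both the gadget construction and the two directions of its correctness. That deferred step \emph{is} the theorem; without it the proposal is a plan, not a proof. The triarity reformulation you lead with does not help here either --- it is only a restatement of the problem (and in the paper the NP-completeness of the triarity problem is a \emph{consequence} of this very theorem, so invoking it would be circular as a hardness source).

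What you are missing is that no gadgets are needed at all. The paper builds $H$ directly from $G_0'$: take the radial graph $\mathcal{R}(G_0')$ and replace each of its edges by a digon; the digons are the $3$-faces, and the remaining faces are quadrilaterals $xfx'f'x$, one for each edge $xx'\in E(G_0')$ separating faces $F,F'$ --- these are the $2$-faces, so the $2$-faces of $H$ are in bijection with $E(G_0')$. Then $G=Tr(H)$ has exactly the announced structure. The heart of the argument (Claim~\ref{clm:3}) is a clean correspondence: a set $L\subseteq E(G_0')$ is a hamiltonian $u$--$v$ path if and only if the quadrilaterals corresponding to $E(G_0')\setminus L$ form a spanning tree of $2$-faces. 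The degree count $\deg_{G_0'}(x)-\deg_L(x)=1$ gives the spanning/parity condition, a cycle of quadrilaterals in $\mathcal{T}$ would correspond to an edge cut of $G_0'$ avoided by $L$ (impossible for a hamiltonian path), and conversely a cycle in $L$ would disconnect, in the face tree, the vertices of $H$ coming from faces inside and outside that cycle. This is the planar hamiltonian-cycle/dual-spanning-tree duality adapted to the radial graph, and it eliminates precisely the ``cheating solutions'' issue you flagged as the main obstacle; your pulled-back face sizes (octagonal $2$-faces, quadrilateral $3$-faces of $G$) do match what this construction produces, so your instinct about the target class was right, but the reduction itself must be supplied, and it is considerably simpler than the gadget scheme you envisage.
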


\begin{proof}{ We want to construct $G$ and $H$ as stated  in the theorem. To this end, 
let  $G_0^{'}$ be given as above and 
assume $G_0^{'}$ is the plane graph resulting from a fixed imbedding of $G_0$  by edge deletion.
Let $H$ be the plane graph resulting by replacing every  edge of the radial graph $\mathcal{R}(G_0^{'})$ with a digon. 
First color
the digons corresponding to edges in $\mathcal{R}(G_0^{'})$ with color $3$. The remaining faces of $H$  are quadrilaterals
$Q=xfx^{'}f^{'}x$  corresponding to edges $xx^{'}\in E(G_0^{'})$ that separate two faces $F$ and $F^{'}$ in $G_0^{'}$ corresponding to $f,f^{'}\in V(\mathcal{R}(G_0^{'}))$. Color these quadrilaterals with color $2$.
Let $G$  be the truncation of $H$; i.e., $G=Tr(H)$. Thus, $G$ is a $3-$connected cubic planar bipartite graph  
whose $3-$face-coloring has color set $\{1,2,3\}$; the $1-$faces of $G$ correspond to the vertices of $H$.

\begin{claim}\label{clm:3}
A set $L$ of edges in $G_0^{'}$ forms a hamiltonian path from $u$ to $v$ in $G_0^{'}$ if and
only if the set $\mathcal{T}$ of $2-$faces $($quadrilaterals$)$ in $H$ corresponding to the edges in $E(G_0^{'})\setminus L$  is a spanning tree of $2-$faces in $H$.
\end{claim}

Suppose $L$ is a hamiltonian path from $u$ to $v$ in $G_0^{'}$. Let $L^{'}= E(G_0^{'})\setminus L$, and let $\mathcal{T}$ be the corresponding quadrilaterals in $H$. Note that for any two edges $g,h\in L^{'}$, there is a sequence of edges $g = e_1,e_2, \ldots,e_k = h$ in $L^{'}$ such that each pair of
edges $e_i, e_{i+1}$ belongs to a face boundary, $1\le i\le k-1$. Therefore the $2-$faces in $\mathcal{T}$ induce a connected subgraph of $H$.
Notice also that every vertex in $H$ belongs to some face in $\mathcal{T}$, since every vertex $x\in V(G_0^{'})$ is
incident to an edge in $L^{'}$, and every face $F$ in $G_0^{'}$ has at least one edge in $L^{'}$.

  Finally,
the $2-$faces in $\mathcal{T}$ do not contain a cycle. Suppose to the contrary,  we had a cycle $Q_1Q_2\ldots Q_kQ_1$ of
$2-$faces in $\mathcal{T}$.
 Since the number of $2-$faces
in $\mathcal{T}$ containing $x$ is equal to $\deg_{G^{'}}(x)-\deg_L(x)=1$, for every vertex $x\in V(G_0^{'})$, so $Q_i$ and $Q_{i+1}$ share a vertex $f\in V(H)$  corresponding to  a face $F\in \mathcal{F}(G_0^{'})$. Thus 
$\{e_1, e_2, \ldots, e_k\}\subset L^{'}$, with $e_i$ corresponding to the face $Q_i$ in the cycle of $2-$faces in $\mathcal{T}$, separates
the graph $G_0^{'}$ into two components; so the hamiltonian path $L$ would have to contain at least
one of these edges $e_i\in L^{'}$, a contradiction. Therefore  
$\mathcal{T}$ is a spanning
tree of $2-$faces for $H$.

Conversely, suppose $\mathcal{T}$ is a spanning tree of  $2-$faces for $H$. Let $L^{'}$ be the corresponding edges in $G_0^{'}$, and let $L=E(G_0^{'})\setminus L^{'}$.
Each vertex $x\in V(G_0^{'})$  belongs to exactly one $2-$face $Q=xfx^{'}f^{'}x$ in $\mathcal{T}$, since every other 
$2-$face in $\mathcal{T}$ containing $x$ also contains either $f$ or $f^{'}$, and therefore these two $2-$faces share an edge 
 and thus cannot both be in the spanning tree of faces $\mathcal{T}$. Therefore every vertex in $G_0^{'}$ is incident to exactly one edge in $L^{'}$, and so the two vertices $u$ and $v$ of degree $2$ in $G_0^{'}$
are incident to exactly one edge in $L$, while the remaining vertices  of degree $3$ in $G_0^{'}$ are
incident to exactly two edges in $L$. That is, $L$ induces a path joining $u$ and $v$ in $G_0^{'}$ plus a possibly empty set
 of cycles in $G_0^{'}$, such that the path and the cycles are disjoint and cover all of $V(G_0^{'})$. We show that $L$ cannot contain a cycle in $G_0^{'}$, and so $L$ is just a hamiltonian path joining $u$
to $v$. 

Suppose
$L$ contains a cycle $C=e_1e_2\ldots e_ke_1$ in $G_0^{'}$. Let $F$ and $F^{'}$ be faces of $G_0^{'}$ inside and outside the
cycle of $C$, respectively, and let $f$ and $f^{'}$ be the vertices in $H$ corresponding to  $F$ and $F^{'}$, respectively.  Since $f$ and $f^{'}$ are vertices in the $\mathcal{T}$, there
is a unique sequence of $2-$faces $Q_1, Q_2,\ldots, Q_l$ in $\mathcal{T}$ such that $Q_1$ contains $f$, $Q_l$ contains $f^{'}$ and each
pair $Q_{i-1}$ , $Q_i$ share a vertex $f_i$ corresponding to 
a face in $G_0^{'}$, for $2\le i<l$. In particular, if we denote $f_1 = f$ and $f_l = f^{'}$, then for
some pair $f_i$, $f_{i+1}$ we must have for the 
corresponding  face $F_i$ in $G_0^{'}$ we must have $F_i\subseteq G_0^{'} \cap int(C)$ and for the corresponding  face $F_{i+1}$ in $G_0^{'}$ we must have $F_{i+1}\subseteq G_0^{'}\cap ext(C)$. This implies that the  $2-$face $Q_i$ in $\mathcal{T}$ corresponds to one of the edges $e_i$ in $L$ and not in $L^{'}$, a contradiction. This completes the proof of Claim~\ref{clm:3}.

 Therefore by Claim~\ref{clm:3}, $H$ has a
spanning tree of $2-$faces if and only if $G_0^{'}$ has a hamiltonian path from $u$ to $v$, and so the
question of whether $H$ has a spanning tree of $2-$faces is NP-complete.
}\end{proof}

We obtain two Corollaries from this result.

\begin{corollary}~$(${\rm\cite[Corollary~3]{Feder}}$)$\label{CR:3} 
Let $G$ be a $3-$connected cubic planar bipartite graph
with a $3-$face-coloring with color set $\{1,2,
3\}$, and let $H$ be the reduced graph obtained by contracting the $1-$faces. Suppose all vertices of $H$ have degree $8$. Then the question
of whether $H$ has an $A-$trail is NP-complete.
\end{corollary}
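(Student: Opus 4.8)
The plan is to obtain this as a corollary of Theorem~\ref{TH:4} together with the equivalence $(ii)\Leftrightarrow(iii)$ of Theorem~\ref{PR:3}, proceeding exactly in the spirit of the proof of Corollary~\ref{CR:2} but now producing an NP-completeness conclusion rather than a polynomial algorithm. The only substantive change from that template is that the contracted $1$-faces will turn out to be quadrilaterals rather than digons or triangles, and it is precisely for quadrilaterals that the corresponding spanning-tree problem is hard.

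First I would unpack the hypothesis ``all vertices of $H$ have degree $8$''. Since $H$ is obtained by contracting the $1$-faces and $G$ is cubic, each $1$-face $F_1$ contracts to a vertex whose degree equals the length of $F_1$; hence the degree-$8$ condition is equivalent to saying that every $1$-face of $G$ is an octagon. Next I would pass to $H'$, the reduced graph obtained from $G$ by contracting the $2$-faces instead of the $1$-faces. Because in a $3$-face-colored cubic plane graph the three faces meeting at a vertex receive all three colors, the faces bordering a given $1$-face alternate between colors $2$ and $3$ around it; thus exactly four of the eight edges of each octagonal $1$-face border $2$-faces. Contracting the $2$-faces deletes these four edges and identifies their endpoints in pairs, so each $1$-face becomes a quadrilateral in $H'$. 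By Theorem~\ref{PR:3} an $A$-trail in $H$ exists if and only if $H'$ has a spanning tree of $1$-faces, and all of these $1$-faces are quadrilaterals.

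It then remains to show that deciding the existence of a spanning tree of quadrilateral $1$-faces in $H'$ is NP-complete, and this is exactly Theorem~\ref{TH:4} after interchanging the names of colors $1$ and $2$. Concretely, I would start from an instance $G$ produced by the construction in the proof of Theorem~\ref{TH:4}, in which, for the graph obtained by contracting the $1$-faces, the $2$-faces are quadrilaterals and the $3$-faces are digons, and for which deciding a spanning tree of $2$-faces is NP-complete (equivalent to deciding a hamiltonian path from $u$ to $v$ in $G_0'$). The $2$-faces of $G$ are octagons, being truncations of the quadrilateral color-$2$ faces of the intermediate graph. Swapping the labels of colors $1$ and $2$ therefore turns these octagonal $2$-faces into $1$-faces, so the relabeled graph satisfies the hypothesis of the present corollary: every $1$-face is an octagon, whence the contracted graph is exactly an $H$ with all vertices of degree $8$. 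Under this relabeling the NP-complete spanning-tree-of-$2$-faces problem of Theorem~\ref{TH:4} becomes the spanning-tree-of-$1$-faces problem in $H'$, which by the previous paragraph is equivalent to the existence of an $A$-trail in $H$. Fixing the embedding so that the outer face is a $3$-face makes Theorem~\ref{PR:3} applicable throughout.

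The place where the argument could silently break, and hence the step I would check most carefully, is the bookkeeping of the three color roles: one must verify that after interchanging colors $1$ and $2$ the graph still meets every hypothesis of Theorem~\ref{PR:3} (cubic, $3$-connected, bipartite, outer face a $3$-face), and that the quadrilaterals arising as contracted octagonal $1$-faces in $H'$ are precisely the faces playing the role of ``$1$-faces'' in the equivalence $(ii)\Leftrightarrow(iii)$. Beyond this alignment of hypotheses I expect no genuinely new difficulty, since the hardness itself is inherited wholesale from Theorem~\ref{TH:4}.
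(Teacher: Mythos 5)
Your proposal is correct and takes essentially the same route as the paper: both arguments invoke the equivalence $(ii)\Leftrightarrow(iii)$ of Theorem~\ref{PR:3} to translate the $A$-trail question for the degree-$8$ reduced graph into the spanning-tree-of-quadrilateral-faces problem, whose NP-completeness is inherited directly from the construction in Theorem~\ref{TH:4}. The only difference is cosmetic: you make explicit the interchange of colors $1$ and $2$, the octagon-to-quadrilateral bookkeeping under contraction, and the choice of a $3$-colored outer face, all of which the paper's terser proof leaves implicit.
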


\begin{proof}{ Consider the reduced graph $H$ 
in the statement of Theorem~\ref{TH:4} where  all $2-$faces in $H$ are quadrilaterals,
corresponding to octagons in $G$. If we contract these $2-$faces, we obtain an $8-$regular reduced graph
$H^{'}$. By Theorem~\ref{PR:3}, $H^{'}$ has an $A-$trail  if and only
if $H$ has a spanning tree of $2-$faces, and this problem is NP-complete by Theorem~\ref{TH:4}.
}\end{proof}

\begin{corollary}~$(${\rm\cite[Corollary~4]{Feder}}$)$\label{CR:4} 
Let $G$ be a $3-$connected cubic planar bipartite graph
with a $3-$face-coloring with color set $\{1,2,
3\}$, and let $H_0$ be the reduced graph obtained by contracting the $1-$faces. Suppose that the $2-$faces in $H_0$ are octagons and digons and
the $3-$faces in $H_0$ are triangles. Then the question of whether $H_0$ has a spanning tree of faces is NP-complete.
\end{corollary}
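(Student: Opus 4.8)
The plan is to establish NP-completeness by a polynomial reduction from the spanning-tree-of-$2$-faces problem of Theorem~\ref{TH:4}. Membership in NP is immediate: for a candidate family $\mathcal{T}\subseteq\mathcal{F}(H_0)$ one verifies in polynomial time both the local condition $\big|\{F\in\mathcal{T}:x\in V(F)\}\big|=\tfrac12\deg_{H_0}(x)$ at every $x\in V(H_0)$ and that the restricted radial graph $\mathcal{R}(V(H_0),\mathcal{T})$ is a tree, since a simple plane graph has only $O(n)$ faces.

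For the hardness direction I would take the reduced graph $H$ built in the proof of Theorem~\ref{TH:4} --- where the $2$-faces are quadrilaterals, the $3$-faces are digons, and (by Claim~\ref{clm:3}) a spanning tree of $2$-faces exists precisely when $G_0^{'}$ has a hamiltonian path from $u$ to $v$ --- and transform it, face by face, into a graph $H_0$ of the prescribed type. Concretely I would exhibit a $3$-connected cubic plane bipartite graph $G^{'}$, still of the form required in $(\mathbf{H})$, whose contraction $G^{'}/(\text{$1$-faces})$ equals $H_0$ and in which each digonal $3$-face of $H$ has been replaced by a triangular $3$-face, each quadrilateral $2$-face blown up to an octagonal $2$-face, and a controlled number of auxiliary digonal $2$-faces inserted. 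The need for octagons rather than quadrilaterals is exactly the phenomenon flagged in the remark preceding this corollary: a $2$-face with four or more sides imposes an ``all-or-none'' (triarity-type) constraint on a spanning tree, and it is this constraint that the octagons are meant to carry while keeping $G^{'}$ simple, cubic, $3$-connected and properly $3$-face-coloured. All steps are clearly polynomial.

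The core of the argument is the equivalence: $H_0$ has a spanning tree of faces if and only if $H$ has a spanning tree of $2$-faces. One direction is routine --- a spanning tree of $2$-faces of $H$ translates octagon-for-quadrilateral, with the forced digons adjoined, into a spanning tree of faces of $H_0$, the acyclicity of the radial graph being read off from the ``weak dual is a tree'' argument used in the proof of Theorem~\ref{PR:3}. The hard direction, which I expect to be the main obstacle, is to show that \emph{every} spanning tree of faces of $H_0$ has this restricted shape; in particular that no triangular $3$-face can ever lie in a spanning tree of faces. Here I would argue locally at the vertices incident to a triangle, using the counting condition together with the basic fact that two faces of $\mathcal{T}$ can share at most one vertex (otherwise $\mathcal{R}(V(H_0),\mathcal{T})$ contains a $4$-cycle): selecting a triangle forces, at one of its vertices, either a violation of the count or a second face sharing two vertices with it, a contradiction in both cases. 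Once the triangles are excluded and the forced digons accounted for, the residual choice among the octagonal $2$-faces mirrors exactly the choice among the quadrilateral $2$-faces of $H$, so Claim~\ref{clm:3} transfers the hamiltonian-path reduction to $H_0$ and completes the proof.
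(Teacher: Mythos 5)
Your overall strategy (reduce from Theorem~\ref{TH:4} via a face-by-face transformation of $H$ into $H_0$, octagons carrying the quadrilateral choices) is the paper's strategy, but your execution has two genuine defects. First, the reduction itself is never actually constructed: you only state desiderata (``exhibit a graph $G^{'}$ whose contraction equals $H_0$''). The paper's construction is concrete and is the heart of the proof: for each $3$-colored digon of $H$ with parallel edges $e$ and $f$, subdivide $e$ with a vertex $w$ and $f$ with a vertex $x$ and join $w$ and $x$ by two parallel edges; this splits each digon into \emph{two} $3$-colored triangles plus one $2$-colored digon, and since every edge of $H$ lies on some digon, every $2$-colored quadrilateral becomes an octagon. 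Without this (or some equally explicit) construction there is nothing to verify.

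Second, and fatally, your proposed key lemma --- that no triangular $3$-face can ever lie in a spanning tree of faces of $H_0$ --- is \emph{false} for this construction, and your plan for proving it rests on a misreading of the definition: the degree condition $\big|\{F\in\mathcal{T}:x\in V(F)\}\big|=\tfrac12\deg(x)$ applies only to quasi vertices, i.e.\ to $x\in V(H)\setminus U$; for a spanning tree of faces $U=V(H_0)$, so that condition is vacuous and a triangle can perfectly well sit as a pendant face of the radial tree. Indeed the paper's forward direction \emph{must} select triangles: if neither of the two quadrilaterals flanking a digon of $H$ lies in the spanning tree $\mathcal{T}$, then neither corresponding octagon is chosen, and the new vertices $w,x$ can only be covered by one of the two triangles --- the central $2$-colored digon alone would form an isolated component of the radial graph, since its only vertices are $w$ and $x$. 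For the same reason your ``routine'' forward direction, which adjoins only the digons to the chosen octagons, breaks down in exactly this case. The correct converse bookkeeping, which replaces your triangle-exclusion lemma, is: any two of the three small faces of $H_0$ arising from one digon of $H$ share both $w$ and $x$, so at most one is in $\mathcal{T}_0$; the two octagons flanking a digon share two original vertices of $H$, so at most one of them is in $\mathcal{T}_0$, and a triangle shares two vertices with each flanking octagon; consequently every small face in $\mathcal{T}_0$ is a pendant of the radial tree, the octagons of $\mathcal{T}_0$ must by themselves cover all original vertices and connect them, and they therefore correspond exactly to a spanning tree of $2$-colored quadrilaterals of $H$, to which Claim~\ref{clm:3} then applies.
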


\begin{proof}{  Let $H$ be the reduced graph of Theorem~\ref{TH:4}, with $2-$colored quadrilaterals and $3-$colored
digons. If $e$ and $f$ are the two parallel edges of a $3-$colored digon, 
subdivide $e$ with  vertex $w$  and 
subdivide $f$ with vertex $x$, with $w$ and $x$ joined by two parallel edges. The $3-$colored
digon splits thus into two $3-$color triangles and a $2-$colored digon, while the $2-$colored quadrilaterals become
$2-$colored octagons, in the new reduced graph $H_0$.

Suppose $H$ has a spanning tree of $2-$colored quadrilaterals $\mathcal{T}$. Select the corresponding $2-$colored
octagons in $H_0$. For a $3-$colored digon consisting of two edges $e$ and $f$ in $H$, if one of the two $2-$colored
quadrilaterals containing $e$ or $f$ is in $\mathcal{T}$, then select the 
$2-$colored digon joining the middle vertices
$w$ and $x$; if neither of the two $2-$colored quadrilaterals containing $e$ or $f$ is in $\mathcal{T}$, then select one
of the two $3-$colored triangles containing $w$ and $x$. The $2-$colored and 
$3-$colored faces in $H_0$ thus selected,
involving $2-$colored octagons, $2-$colored digons, and $3-$colored triangles, form a spanning tree of faces in $H_0$.

Conversely, suppose $H_0$ has a spanning tree of faces $\mathcal{T}_0$. Let $\mathcal{T}$ be the set of $2-$colored
quadrilaterals in $H$ such that the corresponding $2-$colored octagon is in $\mathcal{T}_0$. Note that for each
digon in $H$, only one of the corresponding two $3-$colored triangles and $2-$colored digon in $H_0$ can be in $\mathcal{T}_0$. Thus $\mathcal{T}$ is a spanning tree of $2-$colored faces.
Thus $H_0$ has a spanning tree of arbitrary faces if and only if $H$ has a spanning tree of
$2-$colored faces, and NP-completeness follows from Theorem~\ref{TH:4}.
}\end{proof}

\begin{lemma}\label{LE:G1}
If there exists a non-hamiltonian $3-$connected cubic planar bipartite graph, then there exists a hamiltonian $3-$connected cubic planar bipartite graph $G_1$
with a particular edge $e=uv$  such that $e\in E(C)$ for every hamiltonian cycle $C$ of $G_1$. Furthermore, if $e_1$ and $e_2$ are the two edges other than $e$ incident to $u$ in
$G_1$, then $G_1$ has a hamiltonian cycle $C_i$ traversing $e$ and $e_i$, for $i=1,2$.
\end{lemma}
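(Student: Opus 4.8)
The plan is to convert the hypothetical counterexample into a \emph{blocking region} and splice it into an explicitly hamiltonian frame so that exactly one edge survives in every Hamilton cycle. Let $B$ be a non-hamiltonian $3$-connected cubic planar bipartite graph, supplied by the hypothesis. Fix an edge $x_1x_2\in E(B)$, and let $p,q$ be the other two neighbours of $x_1$ and $r,s$ the other two neighbours of $x_2$; since $B$ is bipartite these four vertices are distinct (a coincidence such as $p=r$ would put a triangle $x_1x_2p$ in $B$). Set $A:=B-\{x_1,x_2\}$, a planar bipartite graph with four degree-$2$ \emph{ports} $p,q,r,s$. The single fact I will extract from non-hamiltonicity is: $A$ admits no spanning path with one end in $\{p,q\}$ and the other in $\{r,s\}$, and no spanning pair of disjoint paths realising the crosswise pairings $\{pr,qs\}$ or $\{ps,qr\}$. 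Each such configuration, after restoring $x_1,x_2$ together with the edges $x_1p,x_1q,x_2r,x_2s$ (and, in the single-path case, $x_1x_2$), closes into a Hamilton cycle of $B$; so all of them are excluded, whereas the \emph{side-respecting} configurations (a spanning $p$--$q$ path, a spanning $r$--$s$ path, or the linkage $\{pq,rs\}$) are the only patterns a Hamilton cycle of the ambient graph may induce on $A$.

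Next I would build a small planar bipartite cubic \emph{router} $W$ with four boundary ports $p',q',r',s'$ and form $G_1:=A\cup W$ by the four interface edges $pp',qq',rr',ss'$, keeping $G_1$ cubic and planar. Bipartiteness is preserved by aligning the two colour classes of $A$ and $W$ across the interface, and $3$-connectivity I would check directly, using that for cubic graphs vertex- and edge-$3$-connectivity coincide (a genuine $4$-edge interface between two internally well-connected sides creates no $\le 2$-edge-cut). The router is to be laid out around the designated edge $e=uv$ so that each admissible crossing of the interface is forced through $e$: because the $A$-side pairs the ports only as $\{p,q\}$ and $\{r,s\}$, the matching $W$-side pattern is, in each case, a spanning $p'$--$q'$ path, a spanning $r'$--$s'$ path, or the complementary crossing linkage $\{q'r',s'p'\}$; and $W$ is to be designed so that all three of these spanning patterns contain $e$. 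Then $e\in E(C)$ for every Hamilton cycle $C$ of $G_1$. Finally, placing $u$ at a position of $W$ carrying a reflective symmetry that fixes $e$ and interchanges $e_1,e_2$ turns any one Hamilton cycle through $e$ into a second one swapping the two continuations at $u$, yielding the cycles $C_1,C_2$ of the \emph{Furthermore} clause.

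The routine verifications (cubicity, planarity, bipartiteness via face-length parity, and $3$-connectivity) I would dispatch along the lines above, and the forcing of $e$ then follows from the side-respecting property by a parity count on the $4$-edge interface. The genuine difficulty, and the step on which I expect the argument to turn, is reconciling the two opposing demands on the gluing. The $A$-side must \emph{block} every routing that would let a Hamilton cycle avoid $e$, yet it must not block \emph{all} routings---otherwise $G_1$ simply inherits the non-hamiltonicity of $B$ and the statement is vacuous. A $3$-edge interface onto $B-x$ is fatal here: parity would force the cycle to span the $B$-part by a single port-to-port path, which the blocker forbids outright, so $G_1$ would be non-hamiltonian; this is exactly why I use a $4$-edge interface, letting the cycle meet the $A$-side in two disjoint paths. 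The crux is therefore to guarantee that at least one side-respecting spanning configuration of $A$ genuinely \emph{exists}---equivalently, that $B$ carries a suitable near-Hamilton structure, such as a $2$-factor whose non-matching edges at $x_1,x_2$ fall in prescribed components---while the Hamilton-cycle-of-$B$ configurations remain excluded. I expect to secure this by choosing the edge $x_1x_2$ carefully, or, if needed, by deleting a slightly larger separating set of $B$ and correspondingly enlarging the interface; once such a configuration is exhibited (which is what makes $G_1$ hamiltonian in the first place), the forcing of $e$ and the construction of $C_1,C_2$ are immediate.
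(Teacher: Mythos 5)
Your architecture---a blocker $A$ cut out of the hypothetical counterexample $B$, glued across a four-edge interface to a router $W$ containing the designated edge $e$---is genuinely different from the paper's proof, and your port analysis is sound as far as it goes: non-hamiltonicity of $B$ does exclude a spanning path of $A$ between $\{p,q\}$ and $\{r,s\}$ and the two crosswise two-path linkages, leaving only the side-respecting patterns. But the proposal has a real gap, and it is exactly the one you flag yourself at the end: nothing guarantees that $A$ admits \emph{any} side-respecting spanning configuration, hence nothing guarantees that $G_1$ is hamiltonian---and the lemma asserts a \emph{hamiltonian} graph with a forced edge. Your closing remedy (``choose $x_1x_2$ carefully, or delete a larger separating set'') is a hope, not an argument; for an arbitrary counterexample $B$ no choice need work. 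The paper resolves precisely this point with an idea your proposal has no substitute for: take a \emph{smallest} counterexample $G_0$ and perform surgery on a facial quadrilateral $Q=wxyzw$ (such a face exists in any cubic planar bipartite graph by Euler's formula). Deleting $V(Q)$ and re-matching the four hanging neighbours in the two planar ways yields either a smaller $3$-connected cubic planar bipartite graph $G_0'$, or, when both re-matchings are only $2$-connected, four pieces $R_i'$ obtained by identifying the three boundary ends of each component to a single vertex $r_i$; in every case \emph{minimality} of $G_0$ certifies that the derived graphs are hamiltonian, while non-hamiltonicity of $G_0$ forbids exactly the cycle patterns that would lift back to $G_0$, which forces an edge. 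Minimality is the single engine that delivers both existence (hamiltonicity) and the forced edge simultaneously; without it, your construction cannot certify the former.

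Two further, smaller problems. First, the router $W$ is never exhibited, and it is not a routine verification: a planar cubic bipartite gadget in which every spanning $p'$--$q'$ path, every spanning $r'$--$s'$ path, and every admissible crossing linkage contains one fixed edge is essentially the kind of forced-edge object the lemma itself asserts, so deferring its design is close to circular. Moreover, your four-crossing analysis omits the second crossing linkage $\{p'r',q's'\}$: combined with the $A$-side pattern (paths $p$--$q$ and $r$--$s$) it also closes into a single Hamilton cycle (trace $p\cdots q$, $qq'$, $q'\cdots s'$, $s's$, $s\cdots r$, $rr'$, $r'\cdots p'$, $p'p$), so $W$ would have to force $e$ in that pattern as well, or exclude it. Second, for the \emph{Furthermore} clause, a reflective symmetry of $W$ transports Hamilton cycles of $G_1$ only if it extends compatibly across the interface, and in any case only once a cycle through $e$ is known to exist. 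The paper instead handles the obstruction constructively: if \emph{two} edges at the special vertex $u_0$ are forced, it replaces $u_0$ by $Q_3$ minus a vertex, producing a vertex $u$ at which exactly one incident edge $e$ is forced while each of the other two edges $e_1,e_2$ is used by some Hamilton cycle.
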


\begin{proof}{
Suppose $G_0$ is a smallest counterexample to Barnette's Conjecture.
%minimal non-hamiltonian $3-$connected cubic planar bipartite graph.

First we construct a hamiltonian $3-$connected cubic planar bipartite graph $G_1^{'}$
with a particular edge $e_0=u_0v$  such that $e_0\in E(C)$ for every hamiltonian cycle $C$ of $G_1^{'}$.

 Let $Q=wxyzw$ be a facial quadrilateral
 in $G_0$ and let $a_1$ be the third neighbour of $a$ in $G_0$, for $a\in \{w,x,y,z\}$.

Set $G_0^{'}=(G_0\setminus \{w,x,y,z\})\cup \{w_1x_1,y_1z_1\}$ and
$G_0^{''}=(G_0\setminus \{w,x,y,z\})\cup \{w_1z_1,x_1y_1\}$.
Both $G_0^{'}$ and $G_0^{''}$ are planar, cubic and bipartite.

Suppose that $G_0^{'}$ is   $3-$connected. By minimality of $G_0$, the graph $G_0^{'}$ has a hamiltonian
cycle. Furthermore, no hamiltonian cycle of $G_0^{'}$ goes through either the edge $w_1x_1$ or the edge $y_1z_1$, otherwise,  
 we can extend this cycle to a hamiltonian cycle  in $G_0$, a contradiction.
 
  We have thus guaranteed that no
hamiltonian cycle in $G_1^{'}=G_0^{'}$ traverses a particular edge $w_1x_1$, and thus every hamiltonian cycle traverses an edge $e_0$
adjacent to $w_1x_1$, as desired.

Suppose instead that $G_0^{'}$ and $G_0^{''}$ are both  $2-$connected only. Then there are two edge cuts of size four $T_1$ and $T_2$ in $G_0$ such that $\{w_1x_1,y_1z_1\}\subset T_1$ and  $\{w_1z_1,x_1y_1\}\subset T_2$.

 Removing the vertices $w,x,y,z$ and the two edge cuts $T_1$ and $T_2$ 
separates $G_0$ into four components $R_1, R_2, R_3, R_4$, with the removed edges of $G_0$ including
an edge from $R_i$ to $R_{i+1}$, for $i=1,2,3$, and an edge from 
$R_4$ to $R_1$, plus the four edges from the four $R_i$'s  adjacent to $Q$.

That is, each $R_i$ has three   edges whose 
endvertices not in $R_i$ can be identified to a single vertex $r_i$ to construct $R_i^{'}$, since their
three endvertices in $R_i$ are at even distance from each other (in the $2-$vertex-coloring of $G_0$, the three 
$2-$colored vertices of $R_i,\ 1\le i\le 4$, must have the same color;
otherwise, two copies of such $R_i$ could be used to construct a cubic bipartite graph having a bridge). Clearly, $R_i^{'}$
is $3-$connected, cubic, planar, and bipartite, for each $i=1,\ldots,4$.

By minimality of $G_0$ each such   $R_i^{'}$
 has a hamiltonian cycle, yet it is not the case that each of the
three choices of two edges going into each $R_i$ yields a hamiltonian cycle, since otherwise we
would obtain a hamiltonian cycle for $G_0$. Thus one of the three edges
incident to $r_i$ in $R_i^{'}$ must belong to every hamiltonian cycle, thus yielding a $3-$connected cubic
planar bipartite graph $G_1^{'}$ with an edge $e_0=u_0v$ that belongs to every hamiltonian cycle of $G_1^{'}$.

It remains to ensure that a hamiltonian cycle in $G_1$, which is forced to take $e =uv$,
can take either $e_1$ or $e_2$ out of $u$. Suppose instead that every hamiltonian cycle in $G_1^{'}$ is forced to
take $e_1^{'}=u_0v^{'}\in E(G_1^{'})$ as well. 

Consider the $3-$connected cubic planar bipartite graph $Q_3$ of a cube with
$8$ vertices and $t\in V(Q_3)$ and $N_{Q_3}(t)=\{u,u^{'},u^{''}\}$.
Let $G_1=(G_1^{'}\setminus \{u_0\})\cup (Q_3\setminus \{t\})\cup\{uv,u^{'}v^{'},u^{''}v^{''}\}$, where  
$e_2^{'}=u_0v^{''}\in E(G_1^{'})$.

It is easy to check that $G_1$ is hamiltonian and every hamiltonian cycle 
going through $e=uv$; and furthermore, $G_1$ has a hamiltonian cycle $C_i$ going through $e$ and $e_i$, for $i=2,3$, where $e_1$ and $e_2$ are the two edges other than $e$ incident to $u$ in
$G_1$.
 }\end{proof}

\begin{theorem}~$(${\rm\cite[Theorem~5]{Feder}}$)$\label{TH:5} 
Assume that  Barnette's Conjecture is false. Then the question of whether a $3-$connected cubic planar bipartite graph  has
a hamiltonian cycle, is NP-complete.
\end{theorem}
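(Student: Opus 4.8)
The plan is to prove NP-completeness by exhibiting a polynomial reduction whose only ingredient drawn from the hypothesis ``Barnette's Conjecture is false'' is the gadget $G_1$ supplied by Lemma~\ref{LE:G1}. Membership in NP is immediate, since a hamiltonian cycle is a polynomially checkable certificate, so the whole burden is NP-hardness. I would reduce from an already-NP-complete hamiltonicity problem that lives as close as possible to the target class, so that the gadget has to repair only the one missing property. The natural source is hamiltonicity of $2$-connected cubic planar bipartite graphs, which is NP-complete by~\cite{Takanori}; this leaves bipartiteness, cubicity and planarity already in hand and reduces the task to promoting $2$-connectivity to $3$-connectivity without disturbing the hamiltonicity question. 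Alternatively one may start from the NP-complete problem ``does the $3$-connected cubic planar graph $G_0$ have a hamiltonian cycle through the prescribed edge $e$?'' set up just before Lemma~\ref{LE:G1}, and use $G_1$ both to force $e$ and to correct the parities of the odd faces; I expect the bipartite source to be cleaner, because it sidesteps all parity bookkeeping.

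The core of the reduction is a local surgery at the $2$-edge-cuts that witness the failure of $3$-connectivity, and here I would use the two structural features of $G_1$ in tandem. First, for cubic graphs edge-connectivity and vertex-connectivity coincide, and across any $2$-edge-cut $\{f_1,f_2\}$ of a $2$-connected cubic graph every hamiltonian cycle is forced to traverse both $f_1$ and $f_2$; equivalently, hamiltonicity of the whole graph decomposes into the existence of hamiltonian paths between the two cut-vertices on each side of the cut. This is exactly the regime in which the forced edge $e=uv$ of $G_1$ is harmless: substituting a copy of $G_1$ for a cut edge forces precisely the edge that was forced anyway, so no solutions are gained or lost. Second, the ``furthermore'' clause---the existence of hamiltonian cycles $C_1,C_2$ of $G_1$ through $e$ and through either of the two remaining edges $e_1,e_2$ at $u$---is what makes the inserted copy transparent: whichever of the two ways the global cycle is obliged to turn at the attachment vertex, the gadget admits a compatible internal hamiltonian traversal, so it imposes no spurious local constraint beyond the wanted forcing. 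Because $G_1$ is itself $3$-connected, cubic, planar and bipartite, splicing it across a cut so that it contributes a third independent connection between the two sides should raise the local edge-connectivity to $3$ while preserving all four structural properties and the proper $2$-coloring (the latter because $e=uv$ joins the two color classes of $G_1$, so the coloring extends consistently across the splice).

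I would then iterate this surgery, always working at a $2$-cut that is innermost in the tree of $3$-connected components, and argue that each step strictly decreases the number of such cuts, terminating in a graph $G^{*}$ that is $3$-connected, cubic, planar and bipartite, of size polynomial in the input (each step adds one fixed-size copy of $G_1$). The correctness statement to prove is the equivalence that $G^{*}$ is hamiltonian if and only if the source instance is a yes-instance. One direction lifts a hamiltonian cycle of the source to $G^{*}$ by routing it through each inserted $G_1$ along $C_1$ or $C_2$ as dictated by the forced crossing; the other direction contracts each copy of $G_1$ back to its forced edge, using that the gadget can only be traversed as a single $u$--$v$ hamiltonian path, to recover a hamiltonian cycle of the source. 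Combined with membership in NP, this yields NP-completeness.

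The step I expect to be the main obstacle is the surgery itself: arranging that the spliced copy of $G_1$ genuinely boosts the cut to edge-connectivity three---rather than merely re-creating a $2$-cut at the gadget boundary, which is what a naive ``insert $G_1$ in place of an edge'' attachment would do---while simultaneously keeping the graph simple, cubic, planar and bipartite and, crucially, not introducing any new way of crossing the cut that a hamiltonian cycle could exploit. Reconciling ``add a third connection for connectivity'' with ``forbid the cycle from ever using that connection, so the correspondence stays exact'' is the delicate point, and it is here that the forced edge (pinning down the mandatory crossing) and the flexibility clause (absorbing the two admissible local turns without over-constraining) must be deployed together; verifying that some attachment of $G_1$ achieves all of this at once is the technical heart of the argument.
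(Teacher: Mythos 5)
Your proposal and the paper share the same starting point (Takanori et al.\ for $2$-connected cubic planar bipartite graphs) and the same gadget (Lemma~\ref{LE:G1}), but your reduction has a genuine gap at exactly the place you flag as ``the technical heart'': the in-place surgery that is supposed to promote each $2$-edge-cut to local $3$-connectivity is never constructed, and there is a structural reason it cannot work in the form you describe. The forcing property of Lemma~\ref{LE:G1} is a statement about hamiltonian \emph{cycles of $G_1$ itself}; it transfers to a larger host graph only when the gadget is attached so that every traversal of the gadget by a hamiltonian cycle of the host closes up (through the deleted part) to a hamiltonian cycle of $G_1$. If the gadget is attached across a $2$-cut with enough boundary edges to supply a ``third independent connection'' between the two sides -- which is unavoidable if the cut is to disappear -- then the gadget has at least four attachment edges, a hamiltonian cycle of the host may traverse it in \emph{two} disjoint path segments, and Lemma~\ref{LE:G1} says nothing about such two-segment traversals; so you can neither forbid the spurious crossing patterns $\{f_1,\text{new}\}$, $\{f_2,\text{new}\}$ nor guarantee the exact solution correspondence. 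With the naive three-edge-preserving insertion (splice $G_1\setminus e$ into the cut edge $f_1$), the correspondence does hold, but then the $2$-cuts persist, so your induction ``each step strictly decreases the number of $2$-cuts'' fails at the first step.

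The paper sidesteps this entirely by \emph{not} repairing connectivity with the gadget. It splits the graph along each $2$-edge-cut $\{e_1,e_2\}$ into the two sides $R^{'},R^{''}$, adding on each side an edge joining the two cut-vertices (legitimate for bipartiteness because those endpoints lie in different color classes, by the parity argument from Lemma~\ref{LE:G1}); iterating yields a polynomial-time (Turing) reduction to the problem ``does a $3$-connected cubic planar bipartite graph have a hamiltonian cycle through prescribed edges $e_1^{'},\ldots,e_k^{'}$?''. Only then is the gadget used, and only via a \emph{three}-point vertex substitution: the endvertex $x_i$ of each prescribed edge $e_i^{'}=x_iy_{i,1}$ is replaced by $G_i\setminus\{u_i\}$, joined by $v_{i,1}y_{i,1},v_{i,2}y_{i,2},v_{i,3}y_{i,3}$. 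Since a hamiltonian cycle crosses this $3$-edge attachment exactly twice, it traverses the gadget in a single path, whose completion through $u_i$ is a hamiltonian cycle of $G_i$ and hence must use $e_i=u_iv_{i,1}$ -- forcing the host cycle through $v_{i,1}y_{i,1}$, i.e.\ through the prescribed edge -- while the ``furthermore'' clause of Lemma~\ref{LE:G1} guarantees both admissible exits are realizable, and the substitution manifestly preserves $3$-connectivity, cubicity, planarity and bipartiteness. In short: the missing idea in your proposal is to decompose along the $2$-cuts (gadget-free) and spend the gadget only on prescribed edges, where its forced-edge property applies verbatim; the one step your plan relies on and postpones is precisely the step the paper's architecture avoids having to prove.
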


\begin{proof}{ Takanori et al.~\cite{Takanori} showed that the question of whether a $2-$connected cubic
planar bipartite graph $R$ has a hamiltonian cycle is NP-complete.

 If such an $R$ has a $2-$edge-cut 
$\{e_1,e_2\}$ that separates $R$ into two components $R^{'}$ and $R^{''}$, then their endpoints in either side
are at odd distance (see the above argument),
 so we may instead join the two endpoints of $e_1$ and $e_2$ in $R^{'}$ and $R^{''}$, separately, and
ask whether $R^{'}$ and $R^{''}$ both contain a hamiltonian cycle containing the added edge joining
the endpoints of $e_1$ and $e_2$.

Repeating this decomposition process, we eventually reduce the
question of whether $R$ has a hamiltonian cycle to the question of whether various $R_i$'s each
contain a hamiltonian cycle going through certain prespecified edges, with each $R_i$ being
$3-$connected. Thus the question of whether a $3-$connected cubic planar bipartite graph $G^{'}$
has a hamiltonian cycle going through certain prespecified edges is NP-complete.

 Let a $3-$connected cubic planar bipartite graph $G^{'}$
 with certain prespecified edges $e^{'}_1,\ldots,e^{'}_k$ that a hamiltonian cycle must traverse, be given. 
Assume that  $e_i^{'}=x_iy_{i,1}$ and $N_{G^{'}}(x_i)=\{y_{i,1},y_{i,2},y_{i,3}\}$, for $i=1,\ldots,k$.

Suppose that  Barnette's Conjecture is false. Then by 
Lemma~\ref{LE:G1}, there exists a hamiltonian $3-$connected cubic planar bipartite graph $G_{i}$ with a vertex $u_i\in V(G_{i})$ and $N_{G_{i}}(u_i)=\{v_{i,1},v_{i,2},v_{i,3}\}$
 such that  every hamiltonian cycle
in $G_{i}$ traverses $e_i= u_iv_{i,1}\in E(G_i)$, $i=1\ldots,k$. Furthermore,   $G_i$ has a hamiltonian cycle traversing $e$ and $u_iv_{i,j}$, for $i=1,\ldots,k$ and $j=2,3$.

Construct a new $3-$connected cubic planar bipartite graph 
$$G=\bigg(G^{'}\setminus \{x_1,\ldots,x_k\}\bigg)\cup 
\bigg(\bigcup_{i=1}^k(G_i\setminus \{u_i\})\bigg) \cup \bigg(\bigcup_{i=1}^k\{v_{i,1}y_{i,1},v_{i,2}y_{i,2},v_{i,3}y_{i,3}\}\bigg).$$

Since every hamiltonian cycle in  $G_i$ traversing the edge $e_i$ and   $G_i$ has also a hamiltonian cycle traversing $e_i$ and $u_iv_{i,j}$, for $i=1,\ldots,k$ and $j=2,3$, the resulting graph $G$ has a hamiltonian cycle if and only if $G^{'}$ 
has a hamiltonian cycle traversing the edges $e_1^{'},\ldots,e_k^{'}$.
 Therefore, whether the resulting $3-$connected cubic
planar bipartite graph $G$ has a hamiltonian cycle, is NP-complete.
}\end{proof}

\end{document}